\definecolor{mgreen}{RGB}{0,150,0}
\definecolor{mblue}{RGB}{65,105,225}
\definecolor{bgreen}{rgb}{.1,.7,.1}
\definecolor{dgreen}{rgb}{.05,.3,.05}
\crefname{equation}{}{}
\newtheorem{thm}{Theorem}[section]
\newtheorem{cor}{Corollary}[section]
\newtheorem{prop}{Proposition}[section]
\theoremstyle{definition}
\newtheorem{eg}{Example}[section]
\newtheorem{notn}{Notation}[section]
\newtheorem{rmk}[thm]{Remark}
\newtheorem{assumpt}[thm]{Assumption}
\newenvironment{pf}{\begin{proof}}{\end{proof}}
\newcommand{\cA}{\ensuremath{\mathcal{A}}}
\newcommand{\C}{\ensuremath{\mathbb{C}}}
\newcommand{\F}{\ensuremath{\mathbb{F}}}
\newcommand{\rH}{\ensuremath{\mathrm{H}}}
\newcommand{\bM}{\ensuremath{\mathbb{M}_2}}
\newcommand{\R}{\ensuremath{\mathbb{R}}}
\newcommand{\cY}{\ensuremath{\mathcal{Y}}}
\newcommand{\bN}{{\bf N}}
\newcommand{\id}{\mathrm{id}}
\newcommand{\smsh}{\wedge}
\newcommand{\rtarr}{\longrightarrow}
\newcommand{\iso}{\cong}
\newcommand{\hsf}{\mathsf{h}}
\newcommand{\mr}[1]{\mathrm{#1}}
\newcommand{\Sp}{\mathbf{Sp}} 
\newcommand{\Betti}{\mr{\upbeta}}
\newcommand{\Ctwo}{{\mr{C}_2}}
\newcommand{\cYR}{\cY^\R}
\newcommand{\mtau}{{\tau}}
\newcommand{\mrho}{{\rho}}
\newcommand{\btau}{{\uptau}}
\newcommand{\bxi}{{\upxi}}
\newcommand{\bMR}{\bM^\R}
\newcommand{\dual}{\vee}
\newcommand{\conj}{{\sf c}}
\newcommand{\GFixFunctor}{\Phi}
\newcommand{\GFix}[1]{\GFixFunctor(#1)}
\newcommand{\jay}{\mathscr{j}}
\DeclareMathOperator{\Sq}{Sq}
\newcommand{\tens}{\mathbin{\otimes}}
\newcommand{\cAR}{\cA^\R}
\newcommand{\cAC}{\cA^\C}
\newcommand{\HF}{\mathbf{H}\mathbb{F}_2}
\newcommand{\HM}{\mathbf{H}_\R\mathbb{F}_2}
\newcommand{\ourcolor}{blue}
\newcommand{\Ddoubleprimecolor}{orange}
\newcommand{\mandalaradius}{1.75pt}
\newcolumntype{C}{>{$}c<{$}}
\newcolumntype{L}{>{$}l<{$}}
\newcolumntype{R}{>{$}r<{$}}
\newcolumntype{H}{>{\setbox0=\hbox\bgroup$}c<{$\egroup}@{}}
\numberwithin{equation}{section}
\numberwithin{figure}{section}
\numberwithin{table}{section}
\begin{document}
\title[The $\cAR$-module structure on $\mathbb{R}$-motivic Spanier-Whitehead duals]{On the  Steenrod module structure of $\mathbb{R}$-motivic Spanier-Whitehead duals \ }
\author{Prasit Bhattacharya}
\address{Department of Mathematics,
New Mexico State University,
Las Cruces, NM 88003, USA} 
\email{prasit@nmsu.edu}
\author{Bertrand {J}. Guillou}
\address{Department of Mathematics\\ University of Kentucky\\
Lexington, KY 40506, USA}
\email{bertguillou@uky.edu}
\author{Ang Li}
\address{Department of Mathematics\\ University of California\\ Santa Cruz, CA 95064, USA}
\email{ali169@ucsc.edu}
\thanks{
Guillou was supported by NSF grant DMS-2003204.
}
\thanks{
 \ Bhattacharya is supported by NSF grant DMS-2305016.
}



\begin{abstract}
The $\mathbb{R}$-motivic cohomology of an $\R$-motivic spectrum  is a module over the $\mathbb{R}$-motivic Steenrod algebra $\mathcal{A}^{\mathbb{R}}$. 
In this paper, we describe how to recover the $\R$-motivic cohomology of the Spanier-Whitehead dual $\mathrm{DX}$ of an $\R$-motivic finite complex $\mr{X}$, as an $\mathcal{A}^{\mathbb{R}}$-module, given the $\mathcal{A}^{\mathbb{R}}$-module structure on the cohomology of $\mr{X}$. As an application, we show that 16 out of 128 different  $\mathcal{A}^{\mathbb{R}}$-module structures on $\mathcal{A}^{\mathbb{R}}(1):= \langle \mr{Sq}^1, \mr{Sq}^2 \rangle$ are self-dual. 
\end{abstract}


\maketitle

\section{Introduction}  
\noindent
Given a finite cell complex $\mr{X}$, it is useful to determine its Spanier-Whitehead dual $\mr{DX}$, which is the dual of the suspension spectrum $\Sigma^\infty \mr{X}$ in the stable homotopy category of spectra. For instance, the mod 2 cohomology of $\mr{DX}$,
as  a (left) module over the mod 2 Steenrod algebra $\cA$, is an input of Adams spectral sequences computing homotopy class of maps out of $\mr{X}$. An interesting case is when $\mr{X}$ is self-dual, as it leads to additional symmetries \cite{MR} often useful for computational purposes.

\medskip
\noindent
In the classical case, the $\cA$-module structure on $\rH^*(\mr{DX}; \mathbb{F}_2) \cong \mr{H}_*(\mr{X}, \mathbb{F}_2)$  is determined easily by the standard formula \Cref{eqn:D''}  which involves  the Kronecker pairing and  the antiautomorphism $\chi\colon\cA \longrightarrow \cA$ of the Steenrod algebra.  However, one should not expect an $\R$-motivic generalization of the standard formula  because,  
first,
the cohomology of the Spanier Whitehead dual of an $\R$-motivic finite complex $\mr{X}$ is not always the linear dual of the cohomology of $\mr{X}$,  as the coefficient ring $\bMR$  is not a field (see \eqref{M2}). 
Second,  the  $\R$-motivic Steenrod algebra is not known to support an antiautomorphism (but see \cref{sec:Lift}). 
 
 \begin{wrapfigure}{l}{.48\textwidth}
\vspace{-2ex}
\begin{center}
\begin{tikzpicture}
\begin{scope}[semithick, every node/.style={sloped,allow upside down}, scale=0.65,font=\small]
\draw[line width = .5mm,bgreen] (0,0)  node[inner sep=0] (v00) {} -- (0,1) node[inner sep=0] (v01) {};
\draw (3,0)  node[inner sep=0] (v30) {} -- (3,1) node[inner sep=0] (v31) {};
\draw (1,1.825)  node[inner sep=0] (v12) {} -- (2,1.825) node[inner sep=0] (v22) {};
\draw (1,-0.825)  node[inner sep=0] (v1n1) {} -- (2,-0.825) node[inner sep=0] (v2n1) {};
\draw[line width = .5mm,bgreen] (v01) -- (v12);
\draw[line width = .3mm,dashed, dgreen] (v01) -- (v12);
\draw[line width = .3mm,dashed, dgreen] (v00) -- (v01);
\draw (v22) -- (v31);
\draw (v00) -- (v1n1);
\draw (v30) -- (v2n1);
\draw (v00) -- (v2n1);
\draw (v00) -- (v30);
\draw (v00) -- (v22);
\draw[\Ddoubleprimecolor, line width = .5mm] (v00) to node[pos=0.5,below={-0.5ex}] {\tiny$D''$} (v12);
\draw (v01) -- (v22);
\draw (v01) -- (v31);
\draw (v01) -- (v1n1);
\draw (v01) -- (v2n1);
\draw (v31) -- (v12);
\draw (v31) -- (v1n1);
\draw (v31) -- (v2n1);
\draw (v30) -- (v1n1);
\draw (v30) -- (v12);
\draw (v30) -- (v22);
\draw (v12) -- (v1n1);
\draw (v22) -- (v2n1);
\filldraw[\ourcolor] (v00) circle (\mandalaradius);
\filldraw[\ourcolor] (v01) circle (\mandalaradius);
\filldraw (v30) circle (\mandalaradius);
\filldraw (v31) circle (\mandalaradius);
\filldraw[\ourcolor] (v12) circle (\mandalaradius);
\filldraw (v22) circle (\mandalaradius);
\filldraw (v1n1) circle (\mandalaradius);
\filldraw (v2n1) circle (\mandalaradius);
\draw (1.5,2.75) node { cohomology};
\draw (1.5,-1.75) node { homology};
\draw (-1.5,0.5) node[rotate=90] { left};
\draw (4.5,0.5) node[rotate=-90] { right};
\draw (v00) node[left] {$\upphi_{\mr{L}}'$};
\draw (v30) node[right] {$\upphi_{\mr{R}}'$};
\draw (v01) node[left] {$\uppsi_{\mr{L}}$};
\draw (v31) node[right] {$\uppsi_{\mr{R}}$};
\draw (v12) node[above left={-0.5ex}] {$\upphi_{\mr{L}}$};
\draw (v22) node[above right={-0.5ex}] {$\upphi_{\mr{R}}$};
\draw (v1n1) node[below left={-0.5ex}] {$\uppsi_{\mr{L}}'$};
\draw (v2n1) node[below right={-0.5ex}] {$\uppsi_{\mr{R}}'$};
\end{scope}
\end{tikzpicture}
\end{center}
\vspace{0ex}
\caption{Boardman's mandala}
\label{fig:mandala}
\end{wrapfigure}

\medskip
\noindent
The ring of stable operations $\mr{E}^*\mr{E}$ of a  cohomology theory $\mr{E}$ is not typically equipped with  an antiautomorphism \cite{B}*{p. 204}.
The natural conjugation on $\mr{E}_*\mr{E}$ is not  $\mr{E}_*$-linear and so does not pass to the dual $\mr{E}^*\mr{E}$.
Even in the presence of an antiautomorphism, the usual formula \cref{eqn:D''} does not yield an $\mr{E}^*\mr{E}$-action on duals, 
(see  \cref{rmk:AntihomUseless}).

\medskip
\noindent
This article is concerned with the case  $\mr{E}=\HM$,  $\mathbb{R}$-motivic  cohomology with coefficients in $\mathbb{F}_2$.  We rely on Boardman's mandala \cite{B} (see \cref{fig:mandala})   to demonstrate a method that computes the  action of the $\R$-motivic Steenrod algebra $\cAR$ on the Spanier-Whitehead  duals of those finite $\R$-motivic  spectra whose cohomology is free over $\bMR$. 

\medskip
\noindent
Let us pause to briefly discuss Boardman's mandala. Given a finite cell complex $\mr{X}$ there are eight ways in which its mod $2$ homology and cohomology interact with the Steenrod algebra and its dual. They represent the vertices of the mandala. Boardman identified the relationships between them, which represent the edges. Each edge of the mandala corresponds to a  formula. For example, the edge $\mr{D}''$ in \cref{fig:mandala} corresponds to  the formula   (see  \cite{B}*{p. 190})
\begin{equation} \label{eqn:D''} 
 \langle (\mr{D}''  \upphi_{\mr{L}}')(\alpha \otimes {\sf f}), {\sf x}\rangle = \langle {\sf f}, \upphi_{\mr{L}}' (\chi (\alpha) \otimes {\sf x}) \rangle 
 \end{equation} 
 that relates the left $\cA$-module structure on the cohomology $\mr{H}^*(\mr{X})$ with that of the left $\cA$-module structure on the homology of $\mr{X}$.  
 However, not all edges of the mandala exist for a general cohomology theory $\mr{E}$ (\cite{B}*{Section~6}).

 \medskip
\noindent
When $\mr{H}^{\star}(\mr{X}):= [\mr{X}, \HM]^{\star}$ is free and finitely generated over $\bMR$,  $ \mr{H}_{\star}(\mr{X})$ is the $\bMR$-linear dual of $\mr{H}^{\star}(\mr{X})$, as the relevant universal  coefficient spectral sequence collapses. 
  Consequently, the work in \cite{B} relates  the left action of $\cA^{\R}$  on $\mr{H}^{\star}(\mr{X})$ as well as the left action of $\cAR$  on $\mr{H}_\star(\mr{X})$, to the  $\cAR_{\star}$-comodule structure on $\mr{H}^{\star}(\mr{X})$ (see \cref{prop:DualizingComodule}, \cref{prop:RelatingModuleComodule} and \cref{prop:ComodFromAMod}). These relations are  the  green dashed  edges in  \cref{fig:mandala}. As a result, one deduces the left $\cAR$-module  structure on $\mr{H}_{\star}(\mr{X})$ from that of $\mr{H}^{\star}(\mr{X})$ without resorting to an antiautomorphism (unlike \eqref{eqn:D''}). 

\medskip
\noindent
Our main application is concerned with identifying the  $\R$-motivic spectra in the class $\cAR_1$ introduced in \cite{BGLtwo}. Each spectrum in $\cAR_1$ is a realization of some $\cAR$-module structure on the subalgebra $\cAR(1):= \bMR\langle \mr{Sq}^1, \mr{Sq}^2 \rangle  \subset \cAR$ (see \cref{fig:motivicA1}). In the classical case,  Davis and Mahowald \cite{DM} showed that the subalgebra $\cA(1)$ of the Steenrod algebra admits four different left $\cA$-module structures, of which two  are self-dual (see also \cite{BEM}*{Remark~1.1}). In \cite{BGLtwo}, we showed that $\cAR(1)$ admits 128 different $\cAR$-module structures. In this paper, we show:

\begin{thm} \label{thm:16}
Among the 128 different $\cAR$-module structures on $\cAR(1)$, only 16 are self-dual.
\end{thm}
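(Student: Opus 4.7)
The plan is to enumerate the $128$ different $\cAR$-module structures on $\cAR(1)$ via the parametrization established in \cite{BGLtwo}, apply the dualization procedure developed earlier in this paper to each, and then count the fixed points of the resulting involution on the parameter set.

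First, I recall from \cite{BGLtwo} the explicit parametrization by a finite set $\cP$ with $|\cP| = 128$: each $\mathbf{p} \in \cP$ encodes a sequence of binary choices, corresponding to the freedom to twist each Steenrod action in the lightning flash of $\cAR(1)$ by a power of $\mtau$ or $\mrho$. Denote by $\mathrm{M}_{\mathbf{p}}$ the corresponding $\cAR$-module, whose underlying $\bMR$-module is free of rank $8$. Because all such modules are free over $\bMR$, the universal coefficient spectral sequence collapses and the machinery developed in the paper applies.

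Next, for each $\mathbf{p} \in \cP$, I apply the three-step procedure given by \Cref{prop:ComodFromAMod}, \Cref{prop:DualizingComodule}, and \Cref{prop:RelatingModuleComodule}: convert the left $\cAR$-action on $\mathrm{M}_{\mathbf{p}}$ into a right $\cAR_\star$-coaction; dualize the coaction $\bMR$-linearly to a left $\cAR$-action on $\mathrm{M}_{\mathbf{p}}^\dual$; and identify the result (after an appropriate suspension) with $\mathrm{M}_{\mathbf{q}(\mathbf{p})}$ for some $\mathbf{q}(\mathbf{p}) \in \cP$. Applying this procedure twice must return the original module up to isomorphism, so the assignment $\mathbf{p} \mapsto \mathbf{q}(\mathbf{p})$ descends to an involution on the set of isomorphism classes indexed by $\cP$.

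The final step is to describe this involution explicitly and count its fixed points. Under Betti realization the involution specializes to the classical analogue that recovers the Davis--Mahowald count of $2$ self-dual structures out of $4$ classical $\cA(1)$-modules \cite{DM}, providing a consistency check. The full $\R$-motivic count $16 = 2^4$ will emerge by exhibiting an explicit splitting of $\cP$ into $56$ pairs swapped by the involution together with a $16$-element fixed-point set; concretely, one expects the duality to preserve the ``core'' parameters that govern the generating Adem relations but to rearrange the $\mtau$-- and $\mrho$--twist parameters in a way that admits an explicit symmetry condition cutting out $16$ solutions.

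The principal obstacle is the combinatorial bookkeeping: one must carefully track how each motivic twist parameter enters the coaction formula and how $\bMR$-linear dualization moves these twists across the Steenrod operations in the lightning flash. A secondary subtlety is that self-duality is an isomorphism condition rather than equality of parameter vectors, so one must invoke the classification statement of \cite{BGLtwo} to conclude that $\mathbf{q}(\mathbf{p}) = \mathbf{p}$ as parameter vectors is both necessary and sufficient for $\mathrm{M}_{\mathbf{p}} \cong \mathrm{M}_{\mathbf{p}}^\dual$ as $\cAR$-modules.
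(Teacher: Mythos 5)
Your proposal takes essentially the same route as the paper: parametrize the $128$ module structures by the vector $\overline{\mr{v}}\in\F_2^7$ from \cite{BGLtwo}, pass from the left $\cAR$-action to the $\cAR_\star$-coaction via \cref{prop:ComodFromAMod}, dualize via \cref{prop:DualizingComodule}, and read off the induced involution $\updelta$ on $\F_2^7$, whose fixed locus is cut out by three $\F_2$-linear conditions and hence has $2^4=16$ points. Two small notes: the coaction produced is a \emph{left} $\cAR_\star$-comodule structure, not a right one, and your outline stops just before the actual bookkeeping that produces $\updelta(\overline{\mr{v}}) = (\gamma_{36},\,\beta_{25}+\beta_{26},\,\beta_{25},\,\jay_{24}+\beta_{06},\,\beta_{14},\,\beta_{03}+\beta_{14},\,\alpha_{03})$ and the resulting conditions $\alpha_{03}=\gamma_{36}$, $\beta_{03}=\beta_{25}+\beta_{26}$, $\beta_{14}=\beta_{25}$, which is where the real content of the proof lies.
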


\begin{rmk}
In \cite{BGLtwo} we showed that every $\cAR$-module structure on $\cAR(1)$ can be realized as a finite $\R$-motivic spectrum, but we do not know if they are unique. Hence, the spectra realizing a self-dual $\cAR$-module structure on $\cAR(1)$ may not be Spanier-Whitehead self-dual. 
\end{rmk} 

\noindent
Davis and Mahowald also showed \cite{DM} that each realization of $\cA(1)$ is the cofiber of a self-map of the spectrum $\cY:= \mathbb{S}/2 \smsh \mathbb{S}/\eta$, where $\eta$ is the first Hopf element in the stable stems.  In the $\R$-motivic stable stems, both $2$ and $\hsf$ in $\pi_{0,0}(\mathbb{S}_\R)$ are lifts of $2 \in \pi_0(\mathbb{S})$ in the classical stable stems, and  $\eta_{1,1} \in \pi_{1,1}(\mathbb{S}_\R)$ is the unique lift of $\eta$ in  bidegree $(1,1)$ (up to a unit). This results in two different $\R$-motivic lifts of $\mathcal{Y}$, namely \[ \text{$\cYR_{(2,1)} = \mathbb{S}_\R/2 \smsh \mathbb{S}_\R/\eta_{1,1}$ and $\cYR_{(\hsf,1)} = \mathbb{S}_\R/\hsf \smsh \mathbb{S}_\R/\eta_{1,1}$}. \]
We showed in \cite{BGLtwo}*{Theorem~1.8} that each $\cAR$-module structure on $\cAR(1)$ can be realized as the cofiber of a map between these $\R$-motivic lifts of $\cY$. Here we show:

\begin{thm} \label{thm:8+8}
Of the self-dual $\cAR$-module structures on $\cAR(1)$, 8 can be realized as the cofiber of a self-map on $\cYR_{(2,1)}$ and 8 as the cofiber of a self-map on $\cYR_{(\hsf,1)}$.
\end{thm}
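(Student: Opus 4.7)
The plan is to show that Spanier-Whitehead duality preserves the class of cofibers of self-maps of $\cYR_{(\alpha,1)}$ for each $\alpha \in \{2, \hsf\}$ separately, and then count within each class using Theorem~\ref{thm:16}.

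First I would compute the Spanier-Whitehead duals of the relevant building blocks. Since both $2$ and $\hsf$ lie in bidegree $(0,0)$, one has $D(\mathbb{S}_\R/\alpha) \simeq \Sigma^{-1,0}\mathbb{S}_\R/\alpha$, and since $\eta_{1,1}$ lies in bidegree $(1,1)$, one has $D(\mathbb{S}_\R/\eta_{1,1}) \simeq \Sigma^{-2,-1}\mathbb{S}_\R/\eta_{1,1}$. Smashing these yields
\[ D\cYR_{(\alpha,1)} \simeq \Sigma^{-3,-1}\cYR_{(\alpha,1)} \]
for each $\alpha$, so each of $\cYR_{(2,1)}$ and $\cYR_{(\hsf,1)}$ is Spanier-Whitehead self-dual up to a bidegree shift.

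Next, if $X$ is the cofiber of a self-map $f \colon \Sigma^{a,b}\cYR_{(\alpha,1)} \to \cYR_{(\alpha,1)}$, then dualizing the cofiber sequence and applying the identification above shows that $DX$ is, up to suspension, again the cofiber of a self-map of the same $\cYR_{(\alpha,1)}$. Consequently, the $\cAR$-module structures realizable as cofibers of self-maps of $\cYR_{(\alpha,1)}$ form a subclass of the 128 structures on $\cAR(1)$ that is closed under the $\cAR$-module dualization procedure developed in the preceding sections. In particular, the 16 self-dual structures identified in Theorem~\ref{thm:16} split into those realized on $\cYR_{(2,1)}$ and those realized on $\cYR_{(\hsf,1)}$, with no migration between the two families.

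To finish, I would enumerate the self-maps of each $\cYR_{(\alpha,1)}$ (classified in \cite{BGLtwo}*{Theorem~1.8}) and apply the $\cAR$-module dualization procedure---the green-dashed edges of Boardman's mandala---to the cohomology of each candidate cofiber to check which yield self-dual $\cAR$-modules. The heuristic symmetry between $2$ and $\hsf$, both lifts of classical $2 \in \pi_0(\mathbb{S})$, suggests the 8+8 split, but because the two families produce cohomologies whose attaching data interacts differently with $\rho$, each family must be verified independently. The main obstacle is precisely this bookkeeping: carefully tracking the bigrading and the $\cAR$-action on each of the 64 candidates per family, so that exactly eight self-dual structures emerge in each case and sum to the total of 16 provided by Theorem~\ref{thm:16}.
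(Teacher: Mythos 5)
Your Spanier--Whitehead computation $D\cYR_{(\alpha,1)} \simeq \Sigma^{-3,-1}\cYR_{(\alpha,1)}$ is correct and gives the right intuition, but the argument built on it has a genuine gap. What the closure observation shows is: \emph{if} $\cAR_{\overline{\mr{v}}}(1)$ happens to be realized by the cofiber of a self-map of $\cYR_{(\alpha,1)}$, then so is $\cAR_{\updelta(\overline{\mr{v}})}(1)$. That does not tell you that every self-dual $\overline{\mr{v}}$ is realized by a self-map of \emph{some} $\cYR_{(\alpha,1)}$. In \cite{BGLtwo}, the 128 structures are realized as cofibers of maps between the two $\R$-motivic lifts of $\cY$, which includes \emph{mixed} maps $\cYR_{(2,1)} \to \cYR_{(\hsf,1)}$ and $\cYR_{(\hsf,1)} \to \cYR_{(2,1)}$; dualizing a mixed cofiber sequence swaps source and target and lands back in the mixed case. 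So ``no migration between the two self-map families'' does not by itself rule out that a self-dual $\overline{\mr{v}}$ lives only in the mixed class, and your closing paragraph essentially concedes that one must still enumerate. Also note the paper's explicit caveat (Remark after \cref{thm:16}): since realizations are not known to be unique, $\cAR$-module self-duality of $\rH^\star(\mr{X})$ need not make $\mr{X}$ spectrally self-dual, so one should not lean too hard on $D\mr{X}$ and $\mr{X}$ being the ``same'' object.

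The paper's proof avoids all of this by using the precise parity dichotomy of \cite{BGLtwo}*{Theorem~1.8}: $\cAR_1[\overline{\mr{v}}]$ is a cofiber of a $\cYR_{(2,1)}$ self-map iff $\beta_{25}+\beta_{26}+\gamma_{36}=1$ and $\alpha_{03}+\beta_{03}=1$, and of a $\cYR_{(\hsf,1)}$ self-map iff both sums are $0$. Conditions (1) and (2) of \cref{Rmotivicselfdual} force $\beta_{25}+\beta_{26}+\gamma_{36}=\alpha_{03}+\beta_{03}$, so the mixed case cannot occur for self-dual structures, and one is always in exactly one of the two self-map families. Counting is then immediate: the self-dual conditions leave four free bits ($\alpha_{03},\beta_{25},\beta_{26},\beta_{06}$), giving $16$ structures, and the single linear constraint $\alpha_{03}+\beta_{25}+\beta_{26}=1$ (resp.\ $=0$) cuts each family to $8$. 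To repair your argument you would need to import this parity characterization anyway, at which point the geometric duality step becomes unnecessary.
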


\begin{notn} 
In all diagrams depicting modules over the Steenrod algebra, (i.e. in \cref{fig:motivicJoker}, \cref{fig:motivicA1}, and \cref{fig:HPhiA1}), a dot $\bullet$ represents a rank one free module over the coefficient ring,
black vertical lines indicate the action of $\Sq^1$, blue curved lines indicate the action of $\Sq^2$, and red  bracket-like lines represent the action of $\Sq^4$. 
A label on an edge represents that the operation hits that multiple of the generator. 
For example, in \cref{fig:motivicJoker}, $\Sq^2({\sf x}_{2,1})$ is $\mtau\cdot {\sf x}_{4,1}$ and $\Sq^4({\sf x}_{2,1})$ is $\rho^2\cdot {\sf x}_{4,1}$.
\end{notn}

\subsection*{Acknowledgements}
We thank Agnes Beaudry, Mike Hill, Clover May, Sarah Petersen, Liz Tatum, and Doug Ravenel for a stimulating conversation at the conference, Homotopy Theory in honor of Paul Goerss, held at Northwestern University in March 2023.
We also thank William Balderrama for an illuminating conversation, and we thank Dan Isaksen for pointing out a typo.

\section{A  review of the $\R$-motivic Steenrod algebra and its dual}

\noindent 
In \cite{V}, Voevodsky defined the motivic Steenrod operations 
$\Sq^n$, for $n\geq 0$, and gave a complete description 
of the $\R$-motivic Steenrod algebra $\cAR$. It is free as 
a left module
over the  $\R$-motivic homology of a point, 
\begin{equation} \label{M2}
 \bMR := \pi_{\star}^\R \HM \iso \F_2 [ \mtau,\mrho], 
 \end{equation}
  where the element $\mtau$ is in bidegree $\star = (0,-1)$, and $\mrho$ is in bidegree $\star = (-1,-1)$. 

\medskip
\noindent 
The subalgebra $\bMR \subset \cAR$ is not central, and therefore $\cAR$ has two $\bMR$-module structures, one given by left multiplication and the other by right multiplication.
The $\R$-motivic dual Steenrod algebra $\cAR_{\star}$ is defined to be the (left) $\bMR$-linear dual of $\cAR$;
it inherits an $\bMR$-module structure, which we call the left action.
The right $\bMR$-action on $\cAR$ also induces an action of $\bMR$ on $\cAR_{\star}$, which we call the right action of $\bMR$ on $\cAR_{\star}$ (see \cite{V}*{p. 48})\footnote{ Since $\bMR$ is commutative, there is no meaningful distinction between ``left'' and ``right'' actions. The adjectives are merely a bookkeeping device.}. These
correspond to the left and the right unit 
\[
\begin{tikzcd}
 \upeta_{\mr{L}} , \upeta_{\mr{R}} \colon \bMR \ar[rr] && \cAR_{\star}
 \end{tikzcd}
 \] 
 of the Hopf algebroid $(\bMR, \cAR_{\star})$.  Explicitly,
\begin{equation} \label{eqn:dualdescribe}
\cAR_\star \iso \frac{\bMR[\btau_0,\btau_1,\btau_2,\dots,\bxi_1,\bxi_2,\dots]}{\btau_n^2=\mtau \bxi_{n+1} + \mrho \btau_0 \bxi_{n+1} + \mrho \btau_{n+1}}
\end{equation}
with $\upeta_{\mr{L}}(\rho) = \upeta_{\mr{R}}(\rho) = \rho$, $\upeta_{\mr{L}}(\tau) = \tau$  and  $ \upeta_{\mr{R}}(\tau) = \tau + \rho \uptau_0$. The comultiplication 
\begin{equation} \label{eqn:DualDiag}
\begin{tikzcd}
\Delta: \cAR_{\star} \rar  &  \cAR_{\star} \underset{\bMR}{\otimes} \cAR_{\star}
\end{tikzcd}
\end{equation}
is given by
\begin{itemize}
\item $ \Delta( \upxi_n ) = \sum_{i=0}^n \bxi_{n-i}^{2^i} \tens \bxi_i $, and 
\item $ \Delta( \uptau_n ) = \btau_n \tens 1 + \sum_{i=0}^n \bxi_{n-i}^{2^i} \tens \btau_{n-i} $, 
\end{itemize}
for all $n \in \mathbb{N}$, where $\xi_0$ is the unit 1. 
The conjugation map 
$
\begin{tikzcd}
\conj\colon \cAR_\star \rar & \cAR_\star
\end{tikzcd}
 $
of the Hopf algebroid structure sends
\begin{itemize}
\item $\conj(\mrho) = \mrho$,
\item $\conj(\mtau) = \mtau + \mrho \btau_0$,
\item $\conj(\bxi_n) = \displaystyle\sum_{i=0}^{n-1} \bxi_{n-i}^{2^i} \conj(\bxi_i)$, and 
\item $\conj(\btau_n) = \btau_n + \displaystyle\sum_{i=0}^{n-1} \bxi_{n-i}^{2^i} \conj(\btau_i)$. 
\end{itemize}
\begin{rmk} The coproduct $\Delta$ in  \eqref{eqn:DualDiag} is an $\bMR$-bimodule map. 
\end{rmk}
\begin{rmk}
The conjugation is not a map of left $\bMR$-modules. In fact, it 
interchanges the left and right $\bMR$-module structures
on $\cAR_{\star}$. 
\end{rmk}
\subsection{Kronecker product} \ 

\medskip
\noindent
The $\R$-motivic Kronecker product  is a natural pairing between  $\R$-motivic homology and cohomology 
  which is constructed as follows: If $\varphi: \mr{X} \longrightarrow \Sigma^{{\sf i},{\sf j}}  \HM$ represents the class $[\varphi] \in \mr{H}^{\star}(\mr{X})$ and ${\sf x} : \Sigma^{{\sf m},{\sf n}} \mathbb{S}_\R \longrightarrow  \HM \smsh  \mr{X}$ represents $[{\sf x}] \in \mr{H}_{{\sf m},{\sf n}} (\mr{X})$, then the composition 
  \[ 
  \begin{tikzcd}
 \Sigma^{{\sf m},{\sf n}} \mathbb{S}_\R \rar["\sf x"] &  \HM \smsh \mr{X} \rar["\id \smsh \varphi"] & \Sigma^{{\sf i},{\sf j}} \HM \smsh \HM \rar &\Sigma^{{\sf i},{\sf j}} \HM 
  \end{tikzcd}
  \]
is the element $\langle {\sf x}, \varphi  \rangle \in \pi_{\star} (\HM) \cong  \bMR$. 

\medskip
\noindent
The Kronecker pairing leads to a homomorphism 
\begin{equation} \label{eqn:n}
\begin{tikzcd}
 \mathfrak{n}: \mr{H}^{\star}(\mr{X}) \rar & \mr{Hom}_{\bMR}(\mr{H}_{\star}(\mr{X}), \bMR),
 \end{tikzcd}
\end{equation}
where $\mathfrak{n}(\varphi)({\sf x}) = \langle {\sf x}, \varphi \rangle$. 

\begin{rmk} \label{rmk:niso}
When $\mr{H}_{\star} (\mr{X})$ is free and finitely generated as an  $\bMR$-module,  the map $\mathfrak{n}$ in \eqref{eqn:n} is an isomorphism. Consequently, elements in $\mr{H}^{\star}(\mr{X})$ can be identified with linear maps from $\mr{H}_{\star}(\mr{X})$, and the Kronecker product  is simply the  evaluation of functionals.   
\end{rmk}

\begin{notn} 
\label{notn:otimesLeftRight}
Since both $\cAR$ and $\cAR_{\star}$ have a left and a right action of $\bMR$,  let  $\cAR \otimes_{\bMR}^{\mathrm{left}} \cAR_\star$ (likewise $\cAR \otimes_{\bMR}^{\mathrm{right}} \cAR_\star$)   denote the tensor product of left (likewise right) $\bMR$-modules. 
\end{notn}

\begin{rmk}
When $ \mr{X}$ is $\HM$,
the Kronecker product is a map of left $\bMR$-modules  $\cAR_{\star} \otimes_{\bMR}^{\mathrm{left}} \cAR \to  \bMR$. 
\end{rmk}

\subsection{The Milnor basis} \label{subsec:Milnor}\ 

\medskip
\noindent 
The dual Steenrod algebra $\mr{H}_\star(\HM) \cong \cAR_{\star}$ is free and degree-wise finitely generated  as an $\bMR$-module. Consequently,  the natural map  of \eqref{eqn:n}  gives an isomorphism 
\begin{equation} \label{iso:dualSteenrod}
  \cAR \cong \mr{Hom}_{\bMR}(\cAR_{\star}, \bMR) 
  \end{equation}
of left $\bMR$-modules. Taking advantage of the above isomorphism, Voevodsky \cite[$\mathsection$13]{V} defines the Milnor basis of the $\R$-motivic Steenrod algebra using the monomial basis of the dual Steenrod algebra \eqref{eqn:dualdescribe}.  

\begin{table}[ht]
\begin{center}
\begin{tabular}{|L|L||L|L|L|}
\toprule
\text{degree} &  {\sf x} \in \cAR_\star & \conj ({\sf x}) & {\sf x}^{\ast} \in \cA^\R & \text{${\sf x}^{\ast}$ in terms of $\mathcal{G}$} \\
\hline \hline
(0,0) & 1 & 1 & 1 & 1 \\
\hline
(1,0) & \btau_0 & \btau_0 & \mathcal{Q}_0 & \Sq^1 \\
\hline
(2,1) & \bxi_1 & \bxi_1 & \mathcal{P}^1 & \Sq^2 \\
\hline
(3,1) & \btau_0 \bxi_1 & \btau_0 \bxi_1 & \mathcal{Q}_0 \mathcal{P}^1 & \Sq^1 \Sq^2 \\
\hline
(3,1) & \btau_1 & \btau_1 + \btau_0 \bxi_1 & \mathcal{Q}_1 & \Sq^1 \Sq^2 + \Sq^2 \Sq^1 \\
\hline
(4,1) & \btau_0 \btau_1 & \btau_0 \btau_1 + \mtau \bxi_1^2  & \mathcal{Q}_0 \mathcal{Q}_1 & \Sq^1 \Sq^2 \Sq^1 \\
 & & + \mrho \btau_0 \bxi_1^2 + \mrho \btau_1 \bxi_1 & &  \\
 \hline
(4,2) & \bxi_1^2 & \bxi_1^2 & \mathcal{P}^2 & \Sq^4 \\
\hline
(5,2) & \btau_0 \bxi_1^2 & \btau_0 \bxi_1^2 & \mathcal{Q}_0 \mathcal{P}^2 & \Sq^1 \Sq^4 \\
\hline
(5,2) & \btau_1 \bxi_1 & \btau_1 \bxi_1 + \btau_0 \bxi_1^2 & \mathcal{Q}_1 \mathcal{P}^1 & \Sq^1 \Sq^4 + \Sq^4 \Sq^1 \\
\hline
(6,2) & \btau_0 \btau_1 \bxi_1 & \btau_0 \btau_1 \bxi_1 + \mtau \bxi_1^3  & \mathcal{Q}_0 \mathcal{Q}_1 \mathcal{P}^1 & \Sq^1 \Sq^4 \Sq^1 \\
 & & + \mrho \btau_0 \bxi_1^3 + \mrho \btau_1 \bxi_1^2 & & \\
 \hline
(6,3) & \bxi_1^3 & \bxi_1^3 & \mathcal{P}^3 & \Sq^2 \Sq^4 + \mtau \Sq^1 \Sq^4 \Sq^1 \\
\hline
(6,3) & \bxi_2 & \bxi_2 + \bxi_1^3 & \mathcal{P}^{(0,1)} & \Sq^2 \Sq^4 + \Sq^4 \Sq^2 \\
\hline
(7,3) & \btau_2 & \btau_2 + \btau_1 \bxi_1^2 & \mathcal{Q}_2 & \Sq^1 \Sq^2 \Sq^4 + \Sq^1 \Sq^4 \Sq^2 \\
& & + \btau_0 \bxi_2 + \btau_0 \bxi_1^3  & & + \Sq^2 \Sq^4 \Sq^1 + \Sq^4 \Sq^2 \Sq^1 \\
\hline
(7,3) & \btau_0 \bxi_1^3 & \btau_0\bxi_1^3 & \mathcal{Q}_0 \mathcal{P}^3 & \Sq^1 \Sq^2 \Sq^4 + \mrho \Sq^1 \Sq^4 \Sq^1 \\
\hline
(7,3) & \btau_0 \bxi_2 & \btau_0 \bxi_2 + \btau_0 \bxi_1^3 & \mathcal{Q}_0 \mathcal{P}^{(0,1)} & \Sq^1 \Sq^2 \Sq^4 + \Sq^1 \Sq^4 \Sq^2 \\
\hline
(7,3) & \btau_1 \bxi_1^2 & \btau_1 \bxi_1^2 + \btau_0 \bxi_1^3 & \mathcal{Q}_1 \mathcal{P}^2 & \Sq^1 \Sq^2 \Sq^4 + \mrho \Sq^1 \Sq^4 \Sq^1 \\
 & & & & + \Sq^2 \Sq^4 \Sq^1\\
 \hline
(8,4) & \bxi_1^4 & \bxi_1^4 & \mathcal{P}^4 & \Sq^8 \\
\hline
(8,4) & \bxi_1 \bxi_2 & \bxi_1 \bxi_2 + \bxi_1^4 & \mathcal{P}^{(1,1)} & \Sq^2 \Sq^4 \Sq^2 + \mtau \Sq^1 \Sq^2 \Sq^4 \Sq^1 \\ \hline
 \end{tabular}
\captionof{table}{ The Milnor basis in low degrees}
\label{tbl:MilnorBasis}
\end{center}
\end{table}

 \medskip
\noindent
For finite sequences  $\mr{E} = ({\sf e}_0, {\sf e}_1, \dots, {\sf e}_m )$ and $\mr{R} =({\sf r}_1,  \dots, {\sf r}_n)$ of non-negative integers, let 
$\uprho(\mr{E}, \mr{R})$ denote the element in $\cAR$ dual to  the monomial 
  \[
  \uptau(\mr{E}) \upxi(\mr{R}) := \prod_{{\sf i} \geq 0} \uptau_{{\sf i}}^{{\sf e}_{\sf i}} \prod_{{\sf j} \geq 1} \upxi_{{\sf i}}^{{\sf r}_{\sf i}} 
  \] 
  in   $\cAR_{\star}$.
 It is standard practice to set $\mathcal{P}^{\mr{R}} := \uprho ( {\bf 0}, \mr{R})$ and $\mathcal{Q}^{\mr{E}} := \uprho (\mr{E}, {\bf 0})$. 
 Moreover, $\mathcal{Q}_i$ is shorthand for the dual to $\uptau_i$.

 \medskip
\noindent
 In \cref{tbl:MilnorBasis}, we record, for each monomial  $\uptau(\mr{E}) \upxi(\mr{R})\in \cAR_{\star}$ in low degree, its image under the conjugation $\conj$ and  its dual element in $\cAR$, both in terms of the Milnor basis as well as in terms of the generators $\mathcal{G}:=  \{ \Sq^{2^k}: k \geq 1 \}$.
  The latter description will be used in \cref{subsec:prelimex} and \cref{sec:A1}.   

\medskip
\noindent
A number of these descriptions in terms of $\mathcal{G}$ can be found in \cite{V}. For example, see \cite{V}*{Lemma~13.1 and Lemma~13.6}. The Adem relations (see \cite{BGLtwo}*{Appendix~A}) are another useful tool. 
For example, the Adem relation $\Sq^2 \Sq^4 = \Sq^6 + \mtau \Sq^5 \Sq^1$ leads to the description for $P^3=\Sq^6$.
The formula for $\mathcal{P}^{(0,1)}$ follows from \cite{K}*{(6)}.
Finally, the formula for $\mathcal{P}^{(1,1)}$ can be deduced from expressing $\Sq^6 \Sq^2$ in terms of the Milnor basis. This can be done by evaluating the formula \cite{V}*{(12.9)}
\[
	\langle {\sf x}, \varphi \psi \rangle = \sum \left\langle {\sf x}', \varphi  \upeta_{\mr{R}}  \big( \langle {\sf x}'', \psi \rangle \big) \right\rangle, \qquad \Delta({\sf x}) = \sum {\sf x}' \otimes {\sf x}''
\]
at $\varphi=\Sq^6$, $\psi=\Sq^2$, and $\sf x$ monomials in low degree. This shows that $\Sq^6 \Sq^2$ is the sum $\mathcal{P}^{(1,1)} + \tau \mathcal{Q}_0 \mathcal{Q}_1 \mathcal{P}^2$.

 \section{Dualizing $\cA^\R$-modules} 
 
 \noindent
 For any $\R$-motivic spectrum $\mr{X}$, its Spanier-Whitehead dual is the function spectrum $\mr{D}\mr{X}:= \mr{F} (\mr{X}, \mathbb{S}_{\R})$. The goal of this section is to identify the $\cAR$-module structure $\mr{H}^{\star}(\mr{DX})$ given the $\cAR$-module structure on $\mr{H}^{\star}(\mr{X})$ under the following assumption. 
 
  \begin{assumpt}\label{FinitenessAssumption}
Let $\mr{X}$ be a finite $\R$-motivic spectrum such that its homology $\mr{H}_{\star}(\mr{X})$ is free over $\bMR$. 
\end{assumpt}

\noindent
\begin{notn} For an $\bMR$-module $\bN$ let  
 \[ \bN^{\dual} := \mr{Hom}_{\bMR}(\bN, \bMR)  \]
 be the set of $\bMR$-linear functionals.
\end{notn}
\subsection{From $ \uppsi_{\mr{L}}$ to $\upphi_{\mr{L}}'$ } \label{sec:IdentDual} Recall that $\rH^\star(\mr{X})$ is naturally a left $\cAR$-module. 
We will also use an $\cAR_\star$-comodule structure on $\rH^{\star}(\mr{X})$
\begin{equation} \label{cohomcomodule}
\begin{tikzcd}
	\uppsi_{\mr{L}} \colon \rH^\star(\mr{X}) \ar[rr] && \cAR_\star \otimes_{\bMR} \rH^\star(\mr{X})
\end{tikzcd}
\end{equation}
which can be constructed  as follows.

\medskip
\noindent
First, note that $\cAR_\star$ is free as a right $\bMR$-module with basis $\mathcal{B}$ given by the conjugate of any left $\bMR$-module basis. Then we have a splitting
\[
	\HM \smsh \HM \simeq \bigvee_{\mathcal{B}} \HM
\]
as right $\HM$-modules. Define a map of motivic spectra $\uppsi$ as the composite
\[
\begin{tikzcd}
	\uppsi \colon \HM \iso \HM \smsh \mathbb{S}_\R \ar[rr, "\id \smsh \upiota"] &&  \HM \smsh \HM \simeq \bigvee_{\mathcal{B}} \HM,
\end{tikzcd}
\]
where $\upiota$ is the unit map of $\HM$. For any finite motivic spectrum, the map $\uppsi$ induces the map $\uppsi_{\mr{L}}$  (see \cite{B}*{Theorem~2.9(b)}) giving $\rH^\star(\mr{X})$ the structure of an $\cAR_\star$-comodule 
as explained in \cite{B}*{Section~6}.  Further,  Boardman showed that: 
\begin{prop}\cite{B}*{Lemma~3.4}\label{prop:DualizingComodule}
Let $\bN$ be a left $\cAR_\star$-comodule. Then $\bN^\dual$ inherits a left $\cAR$-module structure 
\[ 
\begin{tikzcd}
\upphi_{\mr{L}}\colon \cAR \otimes_{\bMR} \bN^\dual \rar & \bN^\dual
\end{tikzcd}
\]
 via the formula
\begin{equation}\label{eq:DualizingComodule}
	(\varphi \cdot \uplambda) (n) = (\varphi \otimes \uplambda) \uppsi_{\mr{L}}(n)
\end{equation}
for $\varphi \in \cAR$, $\uplambda \in \bN^\dual$, and $n \in \bN$.
\end{prop}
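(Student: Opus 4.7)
The plan is to verify the left $\cAR$-module axioms (unitality and associativity) for the formula \eqref{eq:DualizingComodule} directly, using the counit and coassociativity of the comultiplication on $\cAR_\star$, combined with the identification $\cAR \iso \mr{Hom}_{\bMR}(\cAR_\star,\bMR)$ from \eqref{iso:dualSteenrod}, under which the multiplication on $\cAR$ is precisely the $\bMR$-linear dual of $\Delta$. Before the axioms, the first step is to check well-definedness: that $\varphi\cdot \uplambda$ is an $\bMR$-linear functional on $\bN$, and that the recipe descends to a map from $\cAR \otimes_{\bMR} \bN^{\dual}$. Here the distinction in \cref{notn:otimesLeftRight} is essential --- the tensor product appearing in the codomain of $\uppsi_{\mr{L}}$ must be taken so that the right $\bMR$-action on $\cAR_\star$ is paired against the $\bMR$-action on $\bN$, so that evaluating $\varphi$ on the $\cAR_\star$-factor and $\uplambda$ on the $\bN$-factor yields a well-defined element of $\bMR$.

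For unitality, the key fact is that the unit $1 \in \cAR$ corresponds under \eqref{iso:dualSteenrod} to the counit $\upepsilon \colon \cAR_\star \to \bMR$ of the Hopf algebroid. Then
\[
(1 \cdot \uplambda)(n) \;=\; (\upepsilon \otimes \uplambda)\uppsi_{\mr{L}}(n) \;=\; \uplambda\bigl((\upepsilon \otimes \id)\uppsi_{\mr{L}}(n)\bigr) \;=\; \uplambda(n),
\]
by the counit axiom for the comodule $\bN$.

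For associativity, the plan is to iterate \eqref{eq:DualizingComodule} to obtain
\[
(\varphi_1 \cdot (\varphi_2 \cdot \uplambda))(n) \;=\; (\varphi_1 \otimes \varphi_2 \otimes \uplambda)\,(\id \otimes \uppsi_{\mr{L}})\uppsi_{\mr{L}}(n),
\]
then apply coassociativity $(\id \otimes \uppsi_{\mr{L}})\uppsi_{\mr{L}} = (\Delta \otimes \id)\uppsi_{\mr{L}}$ to rewrite the right-hand side as $\bigl((\varphi_1 \otimes \varphi_2)\circ \Delta \otimes \uplambda\bigr)\uppsi_{\mr{L}}(n)$. Since the multiplication on $\cAR$ is by construction dual to $\Delta$, one has $(\varphi_1 \otimes \varphi_2)\circ \Delta = \varphi_1 \varphi_2$, which identifies the expression with $((\varphi_1\varphi_2)\cdot \uplambda)(n)$.

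The main obstacle is purely bookkeeping: the two $\bMR$-actions on $\cAR_\star$ interact non-trivially with every tensor product that appears, and one must check that the left/right conventions chosen for $\uppsi_{\mr{L}}$, for the coproduct $\Delta$ (which is only an $\bMR$-bimodule map with respect to specific actions on either side), and for the pairing $(\varphi, \uplambda)\mapsto (\varphi\otimes \uplambda)$, all fit together coherently so that each identity above is genuinely $\bMR$-balanced. Once this is tracked, the argument collapses to the standard duality between comodule structures over a coalgebra and module structures over its linear dual.
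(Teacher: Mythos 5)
The paper does not actually supply a proof of this proposition --- it is imported verbatim from Boardman's Lemma 3.4, and the citation stands in for the argument. So there is nothing internal to compare against; what you have written is a reconstruction, and the overall strategy (well-definedness from the left/right $\bMR$-action bookkeeping, unitality via the counit axiom, associativity via coassociativity against the dual multiplication on $\cAR$) is the expected one and is sound.

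The one point you should make precise in a final write-up --- and where a naive transcription of the Hopf-\emph{algebra} argument genuinely breaks in the Hopf-\emph{algebroid} setting --- is the identity $(\varphi_1\otimes\varphi_2)\circ\Delta = \varphi_1\varphi_2$. The expression $\varphi_1\otimes\varphi_2$ is not well-defined on $\cAR_\star\otimes_{\bMR}\cAR_\star$ as written: both $\varphi_i$ are \emph{left} $\bMR$-linear functionals, whereas the tensor is balanced against the \emph{right} $\bMR$-action on the first factor and the left action on the second. The correct version is exactly the formula the paper records in \cref{subsec:Milnor} (from Voevodsky (12.9)),
\[
(\varphi\psi)(a) \;=\; \sum \varphi\Big(a'\,\upeta_{\mr{R}}\big(\psi(a'')\big)\Big), \qquad \Delta(a)=\sum a'\otimes a'',
\]
which is the multiplicative analogue of the twisted evaluation in \eqref{eq:DualizingComodulePractical}. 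Your closing paragraph gestures at precisely this balancing issue, so you clearly see it; but the associativity step should be run with this twisted dual multiplication rather than the naive one, using that $\Delta$ transports the right $\bMR$-action on $\cAR_\star$ into the second tensor factor, and then coassociativity of $\uppsi_{\mr{L}}$ closes the argument. With that replacement the sketch is correct.
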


\begin{rmk}
If $\uppsi_{\mr{L}}(n) = \sum_i a_i \otimes n_i$, for $a_i \in \cAR_\star$ and $n_i \in \bN$, then \eqref{eq:DualizingComodule} can be rewritten as
\begin{equation}
\label{eq:DualizingComodulePractical}
	(\varphi \cdot \uplambda) (n) = \sum_i \varphi \Big(a_i \upeta_{\mr{R}}\big(\uplambda(n_i)\big)\Big). 
\end{equation}
\end{rmk}

\medskip
\noindent
 Combining \cref{prop:DualizingComodule} with the following result,  one can deduce the left $\cAR$-module structure on $\mr{H}^\star(\mr{DX})$ ($\upphi_{\mr{L}}'$ in \cref{fig:mandala}) from the left $\cAR_{\star}$-comodule structure on $\mr{H}^\star(\mr{X})$ ($\uppsi_{\mr{L}}$ in \cref{fig:mandala}).
 
 \begin{prop}
\label{AlgSWDuals}
Suppose  $\mr{X}$ satisfies \cref{FinitenessAssumption}. 
There are isomorphisms of left $\cAR$-modules $\rH^\star (\mr{DX}) \iso ( \rH_\star( \mr{DX }) )^\dual \iso ( \rH^\star(\mr{X}) )^\dual
$.
\end{prop}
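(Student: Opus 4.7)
The plan is to exploit Spanier--Whitehead duality together with \cref{FinitenessAssumption}, reducing to the Kronecker iso of \cref{rmk:niso} and \cref{prop:DualizingComodule}.

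First, I would verify that the finiteness hypothesis propagates to $\mr{DX}$. Since $\mr{X}$ is finite and $\rH_\star(\mr{X})$ is free and finitely generated over $\bMR$, the Kronecker iso $\mathfrak{n}$ realizes $\rH^\star(\mr{X}) \iso (\rH_\star(\mr{X}))^\dual$, which is again free and finitely generated. By Spanier--Whitehead duality, $\rH_\star(\mr{DX}) \iso \rH^{-\star}(\mr{X})$ and $\rH^\star(\mr{DX}) \iso \rH_{-\star}(\mr{X})$ as $\bMR$-modules, so both are free and finitely generated. The first claimed iso $\rH^\star(\mr{DX}) \iso (\rH_\star(\mr{DX}))^\dual$ is then just the Kronecker iso applied to $\mr{DX}$, while the second iso $(\rH_\star(\mr{DX}))^\dual \iso (\rH^\star(\mr{X}))^\dual$ comes from $\bMR$-linearly dualizing the Spanier--Whitehead identification $\rH_\star(\mr{DX}) \iso \rH^\star(\mr{X})$ in the appropriate bigrading.

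The substantive work is promoting these $\bMR$-module isos to $\cAR$-module isos. For the first iso, I would check that $\mathfrak{n}$ carries the natural $\cAR$-action on $\rH^\star(\mr{DX})$ to the action on $(\rH_\star(\mr{DX}))^\dual$ induced via \cref{prop:DualizingComodule} from the natural $\cAR_\star$-comodule structure on $\rH_\star(\mr{DX})$. This reduces to a formal computation using \eqref{eq:DualizingComodulePractical} together with the construction of the Kronecker pairing through the unit of $\HM$; it is essentially one of the dashed green edges of Boardman's mandala. For the second iso, it suffices to verify that Spanier--Whitehead duality yields an isomorphism of $\cAR_\star$-comodules between $\rH_\star(\mr{DX})$ and $\rH^\star(\mr{X})$, since dualizing a comodule iso produces a module iso for the structure of \cref{prop:DualizingComodule}.

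The main obstacle is tracing the $\cAR_\star$-comodule equivariance across Spanier--Whitehead duality. I expect this to follow from the naturality of the coaction $\uppsi_{\mr{L}}$, built from the unit $\upiota\colon \mathbb{S}_\R \to \HM$, combined with the observation that the Spanier--Whitehead pairing $\mr{DX} \smsh \mr{X} \to \mathbb{S}_\R$ is compatible with smashing by $\HM$; this compatibility is exactly what is needed to match the two comodule structures.
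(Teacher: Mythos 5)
Your proposal follows essentially the same route as the paper: establish the first isomorphism via the Kronecker map $\mathfrak{n}$ being an $\cAR$-module isomorphism (the paper cites Boardman, Lemma~6.2), then reduce the second isomorphism to an $\cAR_\star$-comodule isomorphism $\rH_\star(\mr{DX}) \iso \rH^\star(\mr{X})$ that dualizes via \cref{prop:DualizingComodule}. The ``compatibility'' you flag at the end is exactly what the paper resolves by the explicit commutative square relating $\HM \smsh \mr{F}(\mr{X},-)$ to $\mr{F}(\mr{X},\HM\smsh-)$ applied to $\id\smsh\upiota$, so your sketch lands on the same argument.
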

\begin{proof}
Under \cref{FinitenessAssumption} the  map  $\mathfrak{n}:\rH^\star(\mr{DX}) \longrightarrow (\rH_\star(\mr{DX}))^\dual$ defined in  \eqref{eqn:n}, is not just an isomorphism of $\bMR$-modules (see \cref{rmk:niso}), but also an isomorphism of  left $\cAR$-modules according to  \cite{B}*{Lemma~6.2}.

\medskip
\noindent
For the second isomorphism, first note that \cref{FinitenessAssumption} implies that there exists an isomorphism  
\begin{equation} \label{eqn:iso}  \rH_\star( \mr{DX} ) \iso \rH^\star(\mr{X}) 
\end{equation}
of $\bMR$-modules. By \cref{prop:DualizingComodule}, it is enough to lift \eqref{eqn:iso}  to an isomorphism of $\cAR_\star$-comodules. To this end, we first observe that the comodule structure on $\rH^\star(\mr{X})$ is induced by the map 
\[
\begin{tikzcd}
 \mr{F}(\mr{X},\HM) \iso \mr{F}(\mr{X},\HM \smsh \mathbb{S}_\R) \ar[rrr, "{\mr{F}(\mr{X},\id \smsh \upiota)}"] &&& \mr{F}(\mr{X},\HM \smsh \HM).
\end{tikzcd}
\]
(see \eqref{cohomcomodule} or \cite{B}*{Theorem~5.4})).
The result then follows from the commutativity of the diagram
\[\begin{tikzcd}
\HM \smsh \mr{F}(\mr{X},\mathbb{S}_\R) \ar[r] \ar[equals,d] & \mr{F}(\mr{X},\HM) \ar[equals,d] \\
\HM \smsh \mathbb{S}_\R \smsh \mr{F}(\mr{X},\mathbb{S}_\R) \ar[r] \ar[d,"\id\smsh \upiota \smsh \id"] & \mr{F}(\mr{X},\HM\smsh \mathbb{S}_\R) \ar[d,"(\id\smsh \upiota)_*"] \\ 
\HM \smsh \HM \smsh \mr{F}(\mr{X},\mathbb{S}_\R) \ar[r] & \mr{F}(\mr{X},\HM \smsh \HM),
\end{tikzcd}\]
where the horizontal maps are evaluation at $\mr{X}$.
\end{proof}

\subsection{From $\upphi_{\mr{L}}$ to $\uppsi_{\mr{L}}$} For any $\varphi \in \cAR \cong \mr{Hom}_{\bMR}(\cAR_{\star}, \bMR)$, 
let $\varphi \conj$ denote the composition 
\[
\begin{tikzcd}
 \varphi \conj : \cAR_\star   \rar["\conj"] & \cAR_\star \rar["\varphi"] & \bMR, 
 \end{tikzcd}
 \]
which is a right $\bMR$-module map as the conjugation $\conj$ is an isomorphism from the right $\bMR$-module structure to the left $\bMR$-module structure of $\cAR_{\star}$. 
 \begin{prop}
\label{prop:RelatingModuleComodule}
Let $\bN$ be a left $\cAR_\star$-comodule with coproduct $\uppsi_{\mr{L}}$. Then, for $n\in \bN$ and $\varphi \in \cAR$,
the formula
\[
	\varphi \cdot n = ( \varphi \conj \otimes \id) \uppsi_{\mr{L}}(n) 
\]
defines a left $\cAR$-module structure on $\bN$.

\end{prop}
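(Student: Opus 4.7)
The plan is to verify the two axioms of a left $\cAR$-module structure: the unit axiom $1 \cdot n = n$ and associativity $(\varphi\psi) \cdot n = \varphi \cdot (\psi \cdot n)$.

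For the unit axiom, I would observe that under the identification $\cAR \iso \mr{Hom}_{\bMR}(\cAR_{\star}, \bMR)$ from \eqref{iso:dualSteenrod}, the multiplicative identity $1 \in \cAR$ corresponds to the counit $\epsilon\colon \cAR_{\star} \to \bMR$ of the Hopf algebroid. Since $\conj(1) = 1$, the composite $1\conj$ also equals $\epsilon$, and therefore $1 \cdot n = (\epsilon \otimes \id) \uppsi_{\mr{L}}(n) = n$ by the counit axiom of the comodule $\bN$.

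For associativity, I would expand both sides in Sweedler-style notation. Writing $\uppsi_{\mr{L}}(n) = \sum_i a_i \otimes n_i$ and invoking coassociativity $(\id \otimes \uppsi_{\mr{L}}) \uppsi_{\mr{L}}(n) = (\Delta \otimes \id) \uppsi_{\mr{L}}(n)$, the right-hand side can be rewritten as
\[
\varphi \cdot (\psi \cdot n) \;=\; \sum_i (\psi\conj)(a_i') \, (\varphi \conj)(a_i'') \, n_i,
\]
where $\Delta(a_i) = \sum a_i' \otimes a_i''$. The left-hand side is $(\varphi\psi) \cdot n = \sum_i (\varphi\psi)\conj(a_i)\, n_i$. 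The two expressions match provided that, for every $x \in \cAR_{\star}$ with $\Delta(x) = \sum x' \otimes x''$,
\[
(\varphi\psi)\conj(x) \;=\; \sum (\psi\conj)(x') \cdot (\varphi\conj)(x'').
\]
This pointwise identity combines two Hopf algebroid facts: first, multiplication in $\cAR$ is dual to the comultiplication on $\cAR_{\star}$, as expressed by Voevodsky's formula $\langle x,\varphi\psi\rangle = \sum \langle x', \varphi\upeta_{\mr{R}}(\langle x'',\psi \rangle)\rangle$ recalled in Section~2; second, $\conj$ is an anti-coalgebra map, so that $\Delta(\conj(x)) = \sum \conj(x'') \otimes \conj(x')$.

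The main technical obstacle is keeping careful track of the two $\bMR$-module structures on $\cAR_{\star}$ throughout the argument. Since $\conj$ interchanges the left and right $\bMR$-actions, the composite $\varphi \conj$ is naturally a right $\bMR$-module map, so the tensor product appearing in $(\varphi \conj \otimes \id)\uppsi_{\mr{L}}(n)$ must be interpreted consistently with the conventions of \Cref{notn:otimesLeftRight}. Correctly placing $\upeta_{\mr{R}}$ when invoking Voevodsky's product formula, and verifying that the resulting action is left $\bMR$-linear so that it defines a genuine left $\cAR$-module structure, are the places where this bimodule bookkeeping is most delicate.
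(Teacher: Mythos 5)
Your overall strategy is the same as the paper's: reduce associativity, via coassociativity of $\uppsi_{\mr{L}}$, to a pointwise identity about pairings of $\cAR$ against $\cAR_\star$, and verify that identity by combining a ``product is dual to coproduct'' fact (Voevodsky's (12.9) for you, Boardman's Lemma 3.3(a) in the paper) with the anti-coalgebra property of $\conj$. You also check the unit axiom, which the paper leaves implicit; that part is fine.

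There is, however, a genuine gap in the associativity step. The displayed rewriting
\[
\varphi \cdot (\psi \cdot n) \;=\; \sum_i (\psi\conj)(a_i') \, (\varphi \conj)(a_i'') \, n_i
\]
implicitly pulls the scalar $\psi\conj(a_i') \in \bMR$ outside the argument of $\varphi$. This is not valid: the scalar enters the $\cAR_\star$-factor through $\upeta_{\mr{L}}$ (since $\uppsi_{\mr{L}}$ is a map of \emph{left} $\bMR$-modules), and after $\conj$ is applied it becomes $\upeta_{\mr{R}}$, which is incompatible with the \emph{left} $\bMR$-linearity of $\varphi$. Concretely, if the rewriting were allowed one would need $\varphi(\upeta_{\mr{R}}(m)\,b) = m\,\varphi(b)$, but already $\Sq^1\bigl(\upeta_{\mr{R}}(\mtau)\bigr) = \Sq^1(\mtau + \mrho\btau_0) = \mrho$ while $\mtau\,\Sq^1(1) = 0$. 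The correct pointwise identity — which is exactly what the paper's proof reduces to — keeps the scalar inside $\varphi$, wrapped in a unit:
\[
(\varphi\psi)(\conj(a)) \;=\; \sum_i \varphi\Bigl(\conj\bigl(\upeta_{\mr{L}}\bigl(\psi(\conj(a_i'))\bigr)\,a_i''\bigr)\Bigr) \;=\; \sum_i \varphi\Bigl(\upeta_{\mr{R}}\bigl(\psi(\conj(a_i'))\bigr)\,\conj(a_i'')\Bigr).
\]
You correctly flag ``correctly placing $\upeta_{\mr{R}}$'' as the delicate point, but your Sweedler identity omits it entirely, and that omission changes the statement substantively (it reduces to yours only after setting $\mrho = 0$). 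Fixing the proof means replacing your pointwise identity with the one above and then invoking Voevodsky/Boardman as you intended.
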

\begin{pf}
Using the coassociativity of the coaction, the statement reduces to checking that 
\begin{equation} \label{eqn:check}
	(\varphi \psi)(\conj (a)) = \sum \varphi \Big( \conj \big( \upeta_{\mr{L}} ( \psi(\conj (a_i'))) a_i'' \big) \Big),
\end{equation}
 for $\varphi, \psi \in \cAR$ and $a \in \cAR_\star$. The formula \eqref{eqn:check} follows from combining \cite{B}*{Lemma~3.3(a)} with $\conj\circ \upeta_{\mr{L}} = \upeta_{\mr{R}}$ and  
 \[ \Delta(\conj(a)) = \sum_i \conj(a_{i}'') \otimes \conj(a_{i}') \]
 whenever  $\Delta(a) = \sum_i a_i' \otimes a_i'' $.  
\end{pf}
\begin{rmk} The right $\bMR$-module structure on $\cAR_\star$ is defined \cite{V}*{Section~12} such that   \[ a \cdot \upeta_{\mr{R}}(m)(\varphi) = a(\varphi\cdot m)\]
 for $m \in \bMR$, $a\in \cAR_\star$ and $\varphi\in \cAR$. 
This shows that the evaluation pairing   defines a map  \[ 
 \begin{tikzcd}
 \cAR \otimes_{\bMR}^{\mathrm{right}} \cAR_\star \ar[rr] && \bMR
 \end{tikzcd}
 \] of $\bMR$-bimodules, where the left $\bMR$-module structure on $\cAR \otimes_{\bMR}^{\mathrm{right}} \cAR_\star$ is obtained via the left action on $\cAR$, and the right $\bMR$-module structure via the left action on $\cAR_\star$. Consequently, the left action constructed in \cref{prop:RelatingModuleComodule} can be described as the composition $\upphi_{\mr{L}}$ in the diagram
 \[ 
 \begin{tikzcd}
 \cAR\otimes_{\bMR} \bN \ar[rr, "\id \otimes \uppsi_{\mr{L}}"]  \ar[rrrdd, dashed, bend right = 20, "\upphi_{\mr{L}}"']  && \cAR\otimes_{\bMR}(\cAR_\star\otimes_{\bMR} \bN) \ar[r, cong] & (\cAR\otimes_{\bMR} \cAR_\star) \otimes_{\bMR} \bN \ar[d, "\id \otimes \conj \otimes \id"']  \\ 
  &&& (\cAR \otimes_{\bMR}^{\mathrm{right}} \cAR_\star) \otimes_{\bMR} \bN \ar[d, "\mathrm{eval}\otimes \id"'] \\
 &&& \bN \cong \bMR\otimes_{\bMR} \bN .
 \end{tikzcd}
 \]
Note that while $\conj$ is not a right $\bMR$-module map, the composition 
\[
\begin{tikzcd}
\cAR \otimes_{\bMR} \cAR_\star \ar[rr ,"\id \otimes \conj"] && \cAR \otimes_{\bMR}^{\mathrm{right}} \cAR_\star \ar[r,"\mathrm{eval}"] &  \bMR
\end{tikzcd}
\]
is a map of $\bMR$-bimodules.
\end{rmk}
\medskip
\noindent 
 If we set $\bN = \rH^\star(\mr{X})$, i.e. the cohomology of a finite spectrum $\mr{X}$ with the $\cA_{\star}$-comodule structure of \eqref{cohomcomodule},   \cref{prop:RelatingModuleComodule} recovers the usual $\cAR$-module structure on $\rH^\star(\mr{X})$ (see \cite{B}*{Lemma~6.3}). Our next result reverse-engineers \cref{prop:RelatingModuleComodule} to obtain a formula that calculates the  $\cAR_\star$-comodule on $\mr{H}^{\star}(\mr{X})$ ($\uppsi_{\mr{L}}$ in \Cref{fig:mandala}) from the $\cAR$-module on $\mr{H}^{\star}(\mr{X})$ ($\upphi_{\mr{L}}$ in \Cref{fig:mandala}). 
 
 \medskip
\noindent 
Let $\mathscr{B}$ be the monomial basis of  the left $\bMR$-module structure on  $\cAR_{\star}$ (as in  \cref{subsec:Milnor}). For simplicity, let ${\bf b}_i$ denote the elements of $\mathscr{B}$, and let ${\bf B}^i \in \cAR$ be the dual basis in the following result.

\begin{prop}
\label{prop:ComodFromAMod}
Let $\bN$ be a left $\cAR_\star$-comodule with coaction map $\uppsi_{\mr{L}}$. 
Then $\uppsi_{\mr{L}}$ is related to $\upphi_{\mr{L}}$ using the formula
\[
	\uppsi_{\mr{L}}(n) = \sum_i c ({\bf b}_i) \otimes ({\bf B}^i \cdot n),
\]
where $\cdot$ is the action of $\cAR$ on $\bN$ constructed using \cref{prop:RelatingModuleComodule}.
\end{prop}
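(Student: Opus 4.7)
The plan is to expand $\uppsi_{\mr{L}}(n)$ in a suitable basis of $\cAR_\star \otimes_{\bMR} \bN$ and then extract its coefficients by feeding the dual basis elements ${\bf B}^k$ into the formula provided by \cref{prop:RelatingModuleComodule}. Since, by the remark following \eqref{eqn:DualDiag}, the conjugation $\conj$ interchanges the left and right $\bMR$-module structures on $\cAR_\star$, the set $\{\conj({\bf b}_i)\}$ is a \emph{right} $\bMR$-module basis of $\cAR_\star$. Because the tensor $\cAR_\star \otimes_{\bMR} \bN$ pairs the right action on $\cAR_\star$ with the left action on $\bN$, I may uniquely write
\[
	\uppsi_{\mr{L}}(n) \;=\; \sum_i \conj({\bf b}_i) \otimes m_i
\]
for certain elements $m_i \in \bN$, reducing the claim to the identification $m_i = {\bf B}^i \cdot n$.

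To establish this, I apply \cref{prop:RelatingModuleComodule} with $\varphi = {\bf B}^k$ and compute
\[
	{\bf B}^k \cdot n \;=\; ({\bf B}^k \conj \otimes \id)\uppsi_{\mr{L}}(n) \;=\; \sum_i {\bf B}^k\!\big(\conj(\conj({\bf b}_i))\big)\, m_i \;=\; \sum_i {\bf B}^k({\bf b}_i)\, m_i \;=\; m_k,
\]
where the penultimate equality invokes the involutivity $\conj^2 = \id$ on $\cAR_\star$, and the last invokes the defining property ${\bf B}^k({\bf b}_i) = \delta_{ki}$ of the dual basis. Substituting $m_i = {\bf B}^i \cdot n$ back into the expansion of $\uppsi_{\mr{L}}(n)$ then yields the claimed formula.

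The only nontrivial ingredient is the identity $\conj^2 = \id$ on $\cAR_\star$; this is the standard fact that the antipode of a commutative Hopf algebroid squares to the identity, and can also be verified directly by induction on monomial degree from the explicit formulas for $\conj$ on the generators $\mrho,\mtau,\bxi_n,\btau_n$ recorded earlier in the section. I therefore do not anticipate any serious obstacle beyond this basis-dualization bookkeeping.
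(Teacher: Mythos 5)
Your proof is correct and follows essentially the same route as the paper's: expand $\uppsi_{\mr{L}}(n)$ uniquely in the right $\bMR$-basis $\{\conj({\bf b}_i)\}$ of $\cAR_\star$, then apply the formula of \cref{prop:RelatingModuleComodule} with $\varphi = {\bf B}^k$ and use $\conj^2 = \id$ together with the dual-basis property to identify the coefficients as ${\bf B}^k\cdot n$. Your version is slightly more explicit than the paper's about why $\{\conj({\bf b}_i)\}$ is a right basis and why $\conj$ is an involution, but the argument is the same.
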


\begin{pf}
Since $\{c({\bf b}_i)\}$ is a basis for $\cAR_\star$ as a free right $\bMR$-module, it follows that there is a unique expression $\uppsi_{\mr{L}}(n) = \sum_i c({\bf b}_i) \otimes n_i$ for appropriate elements $n_i$. On the other hand,  
\begin{eqnarray*}
	{\bf B}^k \cdot n &=& ({\bf B}^k \conj \otimes \id) \uppsi_{\mr{L}}(n) \\
	 &=& \sum_i {\bf B}^k  \conj (\conj({\bf b}_i)) \otimes n_i \\
	 &=& \sum_i {\bf B}^k  ({\bf b}_i) \otimes n_i \\
	  &=& n_k 
\end{eqnarray*}
by \cref{prop:RelatingModuleComodule}.
\end{pf}
\medskip

\subsection{Preliminary examples}  \label{subsec:prelimex} We now demonstrate the usefulness of \cref{prop:DualizingComodule}, \cref{prop:RelatingModuleComodule}, and \cref{prop:ComodFromAMod} by identifying the  $\cAR$-module structure on $\mr{H}^{\star}(\mr{DX})$, for a few well-known finite $\R$-motivic finite complexes $\mr{X}$.

\begin{notn} In the following examples, the $\R$-motivic spectrum $\mr{X}$ will satisfy \cref{FinitenessAssumption}. In particular, $\mr{H}^{\star}(\mr{X})$ will be a free $\bMR$-module. By ${\sf x}_{\sf i,j}$, we will denote an element of its $\bMR$-basis which lives in  cohomological bidegree $({\sf i}, {\sf j})$. By $\hat{\sf x}_{{\sf i}, {\sf j}}$, we will denote an element of $(\mr{H}^{\star}(\mr{X}))^{\dual}$ dual to ${\sf x}_{\sf i,j}$.  Note that the bidegree of $\hat{\sf x}_{\sf i,j}$ is $(-{\sf i}, -{\sf j})$ under the isomorphism $(\mr{H}^{\star}(\mr{X}))^{\dual} \iso \mr{H}^{\star}(\mr{DX}) $. 
\end{notn}

\begin{eg}[The $\R$-motivic mod $2$ Moore spectrum]
 As an $\bMR$-module,  $\rH^\star(\mathbb{S}_\R/2)$ has  generators  ${\sf x}_{0,0}$ and ${\sf x}_{1,0}$. The $\cAR$-module structure is then determined by the relations  \[ \Sq^1({\sf x}_{0,0}) = {\sf x}_{1,0}, \   \Sq^2({\sf x}_{0,0}) = \mrho {\sf x}_{1,0}.\]
By 
\cref{prop:ComodFromAMod}, we get 
\[
\uppsi_{\mr{L}}({\sf x}_{1,1}) = 1\otimes {\sf x}_{1,1},	\ \uppsi_{\mr{L}}({\sf x}_{0,0}) = 1\otimes {\sf x}_{0,0} + \btau_0 \otimes {\sf x}_{1,0} + \mrho \bxi_{1} \otimes {\sf x}_{1,0},  
\]
 which determines the  $\cAR_\star$-comodule structure on $\mr{H}^{\star}(\mathbb{S}_\R/2)$. 
Then we apply \cref{prop:DualizingComodule}, in particular \eqref{eq:DualizingComodulePractical}, to obtain
 \[ 
 \Sq^1(\hat{\sf x}_{1,0}) = \hat{\sf x}_{0,0}, \  \Sq^2(\hat{\sf x}_{1,0}) = \rho \hat{\sf x}_{0, 0},
 \]
 which shows  $\Big( \rH^\star(\mathbb{S}_\R/2) \Big)^\dual \iso \Sigma^{-1} \rH^\star(\mathbb{ S}_\R/2)$ as $\cAR$-modules. 
 This aligns with the fact that $D(\mathbb{S}_\R/2)$ is equivalent to $\Sigma^{-1} \mathbb{S}_\R/2$.
\end{eg}

\begin{eg}[$\R$-motivic mod ${\sf h}$ Moore spectrum] 
\label{eg:Smodh}
As a graded $\bMR$-module, $\mr{H}^{\star}(\mathbb{S}/\hsf)$ is isomorphic to $\mr{H}^{\star}(\mathbb{S}/2)$. However, they differ in their $\cAR$-module structures in that 
\[ \Sq^1({\sf x}_{0,0}) = {\sf x}_{1,0},\  \Sq^{2}({\sf x}_{0,0}) = 0 \]
 determines the $\cAR$-module structure on $\mr{H}^{\star}(\mathbb{S}/\hsf)$. By \cref{prop:ComodFromAMod} 
\[
 \uppsi_{\mr{L}}({\sf x}_{1,1}) = 1\otimes {\sf x}_{1,1}, \ \uppsi_{\mr{L}}({\sf x}_{0,0}) = 1\otimes {\sf x}_{0,0} + \btau_0 \otimes {\sf x}_{1,0},
\]
and using \eqref{eq:DualizingComodulePractical} we see that $\Big( \rH^\star(\mathbb{S}_\R/\hsf) \Big)^\dual \iso \Sigma^{-1} \rH^\star(\mathbb{ S}_\R/\hsf)$. 
This aligns with the fact that $D(\mathbb{S}_\R/\hsf)$ is equivalent to $\Sigma^{-1} \mathbb{S}_\R/\hsf$.
\end{eg}

\begin{figure}[h]
 \begin{tikzpicture}
 \begin{scope}[ thick, every node/.style={sloped,allow upside down}, scale=0.8]
\draw (0,0)  node[inner sep=0] (v0) {} -- (0,1) node[inner sep=0] (v1) {};
\draw  (-0.5,2) node[inner sep=0] (v2) {};
\draw (0,3)  node[inner sep=0] (v3) {} -- (0,4) node[inner sep=0] (v4) {};
 \draw [color=blue] (v1) to [bend right=50] (v3);
 \draw [color=blue] (v0) to [bend left=50] (v2);
 \draw [color=blue] (v2) to [bend left=50] node[pos=0.3,above={4pt},rotate=-95] {$\mtau$}  (v4);
 \draw [red] (v2) to (-1.5,2) to node[pos=0.3,above={5pt},rotate=-90] {$\mrho^2$} (-1.5,4) to (v4);
 \draw [red] (v0) to (1.15,0) to node[pos=0.6,below={5pt},rotate=-90] {$\mtau$} (1.15,4) to (v4);
\filldraw (v0) circle (2.5pt);
\filldraw (v1) circle (2.5pt);
\filldraw (v2) circle (2.5pt);
\filldraw (v3) circle (2.5pt);
\filldraw (v4) circle (2.5pt);
\draw (0,0) node[below]{$ {\sf x}_{0,0}$} (0,1) node[right]{$\ {\sf x}_{1,0} $} (-0.5,2) node[below right={-4pt},yshift={-1pt}]{$\ {\sf x}_{2,1} $};
\draw (0,4) node[above]{${\sf x}_{4,1} $} (0,3) node[right]{$\ {\sf x}_{3,1}$};
\draw (0,-1.5) node { $\mr{H}^{\star}(\mathcal{J}_{\R})$};
\end{scope}\end{tikzpicture}
\qquad
 \begin{tikzpicture}
 \begin{scope}[ thick, every node/.style={sloped,allow upside down}, scale=0.8]
\draw (0,0)  node[inner sep=0] (v0) {} -- (0,1) node[inner sep=0] (v1) {};
\draw  (-0.5,2) node[inner sep=0] (v2) {};
\draw (0,3)  node[inner sep=0] (v3) {} -- (0,4) node[inner sep=0] (v4) {};
 \draw [red] (v0) to (-1.5,0) to node[pos=0.25,above={5pt},rotate=-90] {$\mrho^2$} (-1.5,2) to (v2);
 \draw [color=blue] (v1) to [bend right=50] (v3);
 \draw [color=blue] (v0) to [bend left=50] node[pos=0.4,above={5pt},rotate=-110] {$\mtau$} (v2);
 \draw [color=blue] (v2) to [bend left=50] (v4);
\filldraw (v0) circle (2.5pt);
\filldraw (v1) circle (2.5pt);
\filldraw (v2) circle (2.5pt);
\filldraw (v3) circle (2.5pt);
\filldraw (v4) circle (2.5pt);
\draw (0,0) node[below ]{$ \hat{\sf x}_{4,1}$} (0,1) node[ right]{$\ \hat{\sf x}_{3,1} $} (-0.5,2) node[below right={-4.5pt},yshift={1pt}]{$\ \hat{\sf x}_{2,1} $};
\draw (0,4) node[above]{$\hat{\sf x}_{0,0} $} (0,3) node[right]{$\ \hat{\sf x}_{1,0}$};
\draw (0,-1.5) node {$\mr{H}^{\star}(\mr{D}\mathcal{J}_{\R})$};
\end{scope}\end{tikzpicture}
\caption{ 
The $\cAR$-module structures on the $\R$-motivic $\mathfrak{J}$oker and its dual.
}
\label{fig:motivicJoker}
\end{figure}

\begin{eg}(The $\R$-motivic $\mathfrak{J}$oker)   The  $\cAR(1)$-module of the $\R$-motivic $\mathfrak{J}$oker $\mathcal{J}_\R$ (discussed in \cite{GL}) is the quotient $\cAR(1)/\Sq^3$. 
In \cref{fig:motivicJoker}, we have displayed a particular $\cAR$-module extension of   $\cAR(1)/\Sq^3$ obtained using \cref{list}. 
Using \cref{prop:ComodFromAMod}, in conjunction with \cref{tbl:MilnorBasis}, we notice that 
\begin{eqnarray*}
\uppsi_{\mr{L}}({\sf x}_{4, 2}) &=&  1 \otimes {\sf x}_{4,2} \\
\uppsi_{\mr{L}}({\sf x}_{3, 1}) &=&  1\otimes {\sf x}_{3,1} + \btau_0 \otimes {\sf x}_{4,2} \\
\uppsi_{\mr{L}}({\sf x}_{2, 1}) &=&  1 \otimes {\sf x}_{2,1} + (\mtau \bxi_1 + \mrho \btau_0 \bxi_1 + \mrho \btau_1 + \mrho^2 \bxi_1^2) \otimes {\sf x}_{4,2} \\
\uppsi_{\mr{L}}({\sf x}_{1, 0}) &=&  1 \otimes {\sf x}_{1,0} + \upxi_1 \otimes {\sf x}_{3,1} +  \uptau_1 \otimes {\sf x}_{4,2}  \\
\uppsi_{\mr{L}}({\sf x}_{0, 0}) &=&  1 \otimes {\sf x}_{0,0} + \btau_0 \otimes {\sf x}_{1,0} + \bxi_1 \otimes {\sf x}_{2,1} + (\btau_0 \bxi_1 + \btau_1) \otimes {\sf x}_{3,1} \\
&&  +(\btau_0 \btau_1  + \mrho^2 \bxi_2 + \mrho^2 \bxi_1^3) \otimes {\sf x}_{4,2}
\end{eqnarray*}
determines the $\cAR_{\star}$-comodule structure of $\mr{H}^{\star}(\mathcal{J}_{\R})$.
Then \eqref{eq:DualizingComodulePractical} produces the $\cAR$-module structure on the dual displayed in \cref{fig:motivicJoker}.
\end{eg}

\section{Self-dual $\cAR$-module structures on $\cAR(1)$}
\label{sec:A1}
\noindent
Let  ${\sf x}_{\sf i,j}$ and ${\sf y}_{\sf i,j}$ denote the elements of the $\bMR$-basis of $\cAR(1)$ introduced in \cite[Notation 1.5]{BGLtwo} in bidegree $(\sf i,j)$. 

\begin{thm}\cite{BGLtwo}*{Theorem~1.6}  \label{list}
For every vector  
\[
	 \overline{\mr{v}} = ( \alpha_{03}, \beta_{03},\beta_{14}, \beta_{06}, \beta_{25}, \beta_{26}, \gamma_{36}) \in \F_2^7,
\]
there exists a unique isomorphism class of $\cAR$-module structures on $\cAR(1)$, which we denote by $\cAR_{ \overline{\mr{v}}}(1)$, determined by the formulas
\begin{eqnarray*}
 \Sq^4({\sf x}_{0,0}) &=& \beta_{03} ( \mrho \cdot   {\sf y}_{3,1}) +  (1+ \beta_{03} + \beta_{14}) ( \mtau \cdot   {\sf y}_{4,1}) + \alpha_{03}( \mrho \cdot  {\sf x}_{3,1}) \\
 \Sq^{4}({\sf x}_{1,0}) &=& {\sf y}_{5,2} + \beta_{14} ( \mrho \cdot   {\sf y}_{4,1}) \\
 \Sq^4({\sf x}_{2,1}) &=& \beta_{26}(\mtau \cdot {\sf y}_{6,2}) + \beta_{25} (\mrho \cdot {\sf y}_{5,2}) + \jay_{24} (\mrho^2 \cdot {\sf y}_{4,1}) \\
 \Sq^4({\sf x}_{3,1}) &=& (\beta_{25} + \beta_{26})(\mrho \cdot {\sf y}_{6,2}) \\
 \Sq^4({\sf y}_{3,1}) &=& \gamma_{36}(\mrho \cdot {\sf y}_{6,2}) \\
 \Sq^8({\sf x}_{0,0}) &=& \beta_{06} (\mrho^2 \cdot {\sf y}_{6,2}),
\end{eqnarray*}
where $\jay_{24}= \beta_{03}  \gamma_{36} + \alpha_{03}(\beta_{25} + \beta_{26}).$
Further, any $\cAR$-module whose underlying $\cAR(1)$-module is free on one generator is isomorphic to one listed above. 
\end{thm}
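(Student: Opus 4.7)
The plan is to extend the given free cyclic $\cAR(1)$-module structure on $\cAR(1)$ to an $\cAR$-action by prescribing the remaining $\bMR$-algebra generators, cutting down by the motivic Adem relations, and confirming that distinct parameter vectors give non-isomorphic modules. First I would reduce to determining $\Sq^4$ and $\Sq^8$: Voevodsky's presentation gives $\cAR$ as generated over $\bMR$ by $\{\Sq^{2^k}\}_{k\geq 0}$, and the $\cAR(1)$-action already prescribes $\Sq^1$ and $\Sq^2$. Since $\cAR(1)$ has top internal degree $6$, the operations $\Sq^{2^k}$ with $k \geq 4$ vanish on every basis element for degree reasons, and $\Sq^8$ can act nontrivially only on ${\sf x}_{0,0}$; bidegree then forces $\Sq^8({\sf x}_{0,0})$ to lie in the $\F_2$-line spanned by $\mrho^2 \cdot {\sf y}_{6,2}$, contributing the parameter $\beta_{06}$. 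Enumerating bidegree-compatible targets for $\Sq^4$ on each basis element gives ten additional a priori $\F_2$-parameters: three for ${\sf x}_{0,0}$, two for ${\sf x}_{1,0}$, three for ${\sf x}_{2,1}$, and one each for ${\sf x}_{3,1}$ and ${\sf y}_{3,1}$, while $\Sq^4$ vanishes on ${\sf y}_{4,1}, {\sf y}_{5,2}, {\sf y}_{6,2}$ by bidegree.

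Next I would impose the motivic Adem relations involving $\Sq^4$ and $\Sq^8$. Each non-admissible composite such as $\Sq^2\Sq^4$, $\Sq^4\Sq^2$, $\Sq^4\Sq^4$, or $\Sq^1\Sq^8$ can be expanded via Voevodsky's motivic Adem formulas as an $\bMR$-linear combination of admissibles; equating both ways of acting on ${\sf x}_{0,0}$ produces an $\F_2$-linear constraint on the eleven candidate coefficients. Solving the resulting linear system should eliminate exactly four coefficients and leave the seven free parameters $(\alpha_{03}, \beta_{03}, \beta_{14}, \beta_{06}, \beta_{25}, \beta_{26}, \gamma_{36})$. Concretely, the coefficient of ${\sf y}_{5,2}$ in $\Sq^4({\sf x}_{1,0})$ should be forced to $1$, the coefficient of $\mrho \cdot {\sf y}_{4,1}$ in the same expression should be identified with $\beta_{14}$ (linking it to the coefficient of $\mtau \cdot {\sf y}_{4,1}$ in $\Sq^4({\sf x}_{0,0})$), and the coefficients $\jay_{24}$ and $\beta_{25}+\beta_{26}$ in $\Sq^4({\sf x}_{2,1})$ and $\Sq^4({\sf x}_{3,1})$ should emerge as the stated polynomial expressions in the free parameters.

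For uniqueness of the isomorphism class, any $\cAR(1)$-module automorphism of the free cyclic module $\cAR(1)$ must send ${\sf x}_{0,0}$ to a unit multiple of itself, and the unit group of $\bMR = \F_2[\mtau, \mrho]$ is trivial, so distinct parameter vectors automatically give non-isomorphic $\cAR$-modules. The main obstacle is the Adem-relation bookkeeping: one must expand each relevant relation using Voevodsky's motivic formulas (or dually via the Milnor basis of \cref{tbl:MilnorBasis}), carefully track the $\mrho$ and $\mtau$ corrections that distinguish the motivic from the classical case, and verify that the resulting linear system has precisely rank $4$ with no hidden obstructions. A useful consistency check is that specializing $\mtau \mapsto 1, \mrho \mapsto 0$ should collapse the $128 = 2^7$ motivic structures to the $4 = 2^2$ classical $\cA(1)$-module structures of Davis-Mahowald \cite{DM}, with only the combinations $1 + \beta_{03} + \beta_{14}$ and $\beta_{26}$ surviving the reduction.
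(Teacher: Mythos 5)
Your proposal is essentially correct in approach. Note that this paper does not prove the statement itself---it imports it verbatim from \cite{BGLtwo}*{Theorem~1.6}---so the comparison is to that source's argument, and your outline matches it: reduce to prescribing $\Sq^4$ and $\Sq^8$ (since $\Sq^1,\Sq^2$ are given by the $\cAR(1)$-action and $\Sq^{2^k}$ for $k\geq 4$ die for bidegree reasons); enumerate the eleven bidegree-admissible $\F_2$-coefficients (three each for $\Sq^4$ on ${\sf x}_{0,0}$ and ${\sf x}_{2,1}$, two on ${\sf x}_{1,0}$, one each on ${\sf x}_{3,1}$ and ${\sf y}_{3,1}$, one for $\Sq^8$ on ${\sf x}_{0,0}$); impose the motivic Adem relations, which fix four of them (the unit coefficient of ${\sf y}_{5,2}$, the $(1+\beta_{03}+\beta_{14})$ tying $\Sq^4{\sf x}_{0,0}$ to $\Sq^4{\sf x}_{1,0}$, and the polynomial expressions $\jay_{24}$ and $\beta_{25}+\beta_{26}$) leaving exactly seven free; and conclude distinctness of isomorphism classes from the triviality of the $\cAR(1)$-automorphism group of the cyclic module, since any such automorphism must fix the bidegree-$(0,0)$ generator. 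Your parameter count ($11 - 4 = 7$) is right, and the sanity check that $\mrho\mapsto 0$, $\mtau\mapsto 1$ leaves only $1+\beta_{03}+\beta_{14}$ and $\beta_{26}$, recovering the four Davis--Mahowald $\cA(1)$-structures, is also correct.

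The one place your outline is only a sketch is the Adem-relation bookkeeping itself: you correctly identify that expanding composites like $\Sq^4\Sq^1$, $\Sq^2\Sq^4$, $\Sq^4\Sq^4$ via Voevodsky's formulas and evaluating on ${\sf x}_{0,0}$ is what forces the four constraints, but you don't carry out the linear algebra showing the system has rank exactly four with no overdetermination. That computation is the genuine content of \cite{BGLtwo}*{Theorem~1.6}; flagging it as the main obstacle is honest, and the rest of your reasoning---the bidegree enumeration and the triviality of the automorphism group---is sound and matches the cited argument.
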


\begin{figure}[h]
 \[
\begin{tikzpicture}\begin{scope}[thick, every node/.style={sloped,allow upside down}, scale=0.7]
\draw (0,0)  node[inner sep=0] (v00) {} -- (0,1) node[inner sep=0] (v01) {};
\draw (0,2)  node[inner sep=0] (v11) {} -- (0,3) node[inner sep=0] (v12) {};
\draw (1,3)  node[inner sep=0] (v22) {} -- (1,4) node[inner sep=0] (v23) {};
\draw (1,5)  node[inner sep=0] (v33) {} -- (1,6) node[inner sep=0] (v34) {};
 \draw [color=blue] (v00) to [out=150,in=-150] (v11);
 \draw [color=blue] (v01) to [out=15,in=-90] (v22);
 \draw [color=blue] (v12) to [out=90,in=-165] (v33);
 \draw [color=blue] (v23) to [out=30,in=-30] (v34);
 \draw [color=blue] (v11) to [out=15,in=-165] node[pos=0.6,above={3pt},rotate=-85] {\textcolor{black}{$\mtau$}} (v23);
\filldraw (v00) circle (2.5pt);
\filldraw (v01) circle (2.5pt);
\filldraw (v11) circle (2.5pt);
\filldraw (v12) circle (2.5pt);
\filldraw (v22) circle (2.5pt);
\filldraw (v23) circle (2.5pt);
\filldraw (v33) circle (2.5pt);
\filldraw (v34) circle (2.5pt);
\draw (0,0) node[right]{$ {\sf x}_{0,0}$} (0,1) node[right]{ \ \ ${\sf x}_{1,0}$} (0,2) node[left,xshift={1pt}]{${\sf x}_{2,1}  \ $ } (0,3) node[left]{${\sf x}_{3,1} $  };
\draw (1,6) node[right]{$\ {\sf y}_{6,2}$} (1.3,5.3) node[left]{${\sf y}_{5,2}  \ $} (1,4) node[right]{$ \ {\sf y}_{4,1} $} (1,3) node[right]{$ {\sf y}_{3,1}$};
\end{scope}\end{tikzpicture} 
\hspace{2cm}
\begin{tikzpicture}\begin{scope}[thick, every node/.style={sloped,allow upside down}, scale=0.7]
\draw (0,0)  node[inner sep=0] (v00) {} -- (0,1) node[inner sep=0] (v01) {};
\draw (0,2)  node[inner sep=0] (v11) {} -- (0,3) node[inner sep=0] (v12) {};
\draw (1,3)  node[inner sep=0] (v22) {} -- (1,4) node[inner sep=0] (v23) {};
\draw (1,5)  node[inner sep=0] (v33) {} -- (1,6) node[inner sep=0] (v34) {};
 \draw [color=blue] (v00) to [out=150,in=-150] (v11);
 \draw [color=blue] (v01) to [out=15,in=-90] (v22);
 \draw [color=blue] (v12) to [out=90,in=-165] (v33);
 \draw [color=blue] (v23) to [out=30,in=-30] (v34);
 \draw [color=blue] (v11) to [out=15,in=-165] node[pos=0.6,above={3pt},rotate=-85] {\textcolor{black}{$\mtau$}} (v23);
\filldraw (v00) circle (2.5pt);
\filldraw (v01) circle (2.5pt);
\filldraw (v11) circle (2.5pt);
\filldraw (v12) circle (2.5pt);
\filldraw (v22) circle (2.5pt);
\filldraw (v23) circle (2.5pt);
\filldraw (v33) circle (2.5pt);
\filldraw (v34) circle (2.5pt);
\draw (0,0) node[right]{$ \hat{\sf y}_{6,2}$} (0,1) node[right]{ \ \ $\hat{\sf y}_{5,2}$} (0,2) node[left,xshift={1pt}]{$\hat{\sf y}_{4,1}  \ $ } (0,3) node[left]{$\hat{\sf y}_{3,1} $  };
\draw (1,6) node[right]{$\ \hat{\sf x}_{0,0}$} (1.3,5.3) node[left]{$\hat{\sf x}_{1,0}  \ $} (1,4) node[right]{$ \ \hat{\sf x}_{2,1} $} (1,3) node[right]{$ \hat{\sf x}_{3,1}$};
\end{scope}\end{tikzpicture}
\]
\caption{A singly-generated free $\cAR(1)$-module (on the left), and its dual (on the right). }
\label{fig:motivicA1}
 \end{figure}
 
\noindent
Using \cref{prop:ComodFromAMod}, we  calculate the $\cAR_\star$-comodule structure $\uppsi_{\mr{L}}$  on $\cAR_{\overline{\mr{v}}}(1)$: 
\begin{eqnarray*}
\uppsi_{\mr{L}}({\sf y}_{6,2}) & =&  1 \otimes {\sf y}_{6,2}\\
\uppsi_{\mr{L}}({\sf y}_{5,2}) & =&  1\otimes {\sf y}_{5,2} + \btau_0 \otimes {\sf y}_{6,2} \\
\uppsi_{\mr{L}}({\sf y}_{4,1}) & =& 1\otimes {\sf y}_{4,1} + \bxi_1 \otimes {\sf y}_{6,2} \\
\uppsi_{\mr{L}}({\sf y}_{3,1}) & =& 1\otimes {\sf y}_{3,1} + \btau_0 \otimes {\sf y}_{4,1} + (\btau_1 + \btau_0 \bxi_1 + \gamma_{36} \mrho \bxi_1^2) \otimes {\sf y}_{6,2} \\
\uppsi_{\mr{L}}({\sf x}_{3,1}) & =&  1 \otimes {\sf x}_{3,1} + \bxi_1 \otimes {\sf y}_{5,2} + (\btau_1 + (\beta_{25} + \beta_{26})  \mrho \bxi_1^2 ) \otimes {\sf y}_{6,2} \\
\uppsi_{\mr{L}}({\sf x}_{2,1}) & =& 1 \otimes {\sf x}_{2,1} + \btau_0 \otimes {\sf x}_{3,1} + (\mtau \bxi_1 + \mrho \btau_1 + \mrho \btau_0 \bxi_1 + \jay_{24} \mrho^2 \bxi_1^2) \otimes {\sf y}_{4,1} \\
 &&  + (\btau_1 + \btau_0 \bxi_1 + \beta_{25} \mrho \bxi_1^2) \otimes {\sf y}_{5,2} + (\btau_0 \btau_1 + (1 + \beta_{26}) \mtau \bxi_1^2) \otimes {\sf y}_{6,2}\\
  && + ( (1 + \beta_{25}) \mrho \btau_0 \bxi_1^2 + \mrho \btau_1 \bxi_1 + \jay_{24} \mrho^2 \bxi_2) \otimes {\sf y}_{6,2} \\
  \uppsi_{\mr{L}}({\sf x}_{1,0}) & =&1 \otimes {\sf x}_{1,0} + \bxi_1 \otimes {\sf y}_{3,1} + (\btau_1 + \beta_{14} \mrho \bxi_1^2)  \otimes {\sf y}_{4,1} + \bxi_1^2 \otimes {\sf y}_{5,2} \\
 && + (\btau_1 \bxi_1 + \gamma_{36} \mrho \bxi_1^3 + (\beta_{14} + \gamma_{36}) \mrho \bxi_2) \otimes {\sf y}_{6,2} \\
 \uppsi_{\mr{L}}({\sf x}_{0,0}) & =& 1\otimes \sf x_{0,0} + \btau_0 \otimes {\sf x}_{1,0} + \bxi_1 \otimes {\sf x}_{2,1} + (\btau_1 + \alpha_{03} \mrho \bxi_1^2 ) \otimes {\sf x}_{3,1} \\
 && + (\btau_1 + \btau_0 \bxi_1 + \beta_{03} \mrho \bxi_1^2) \otimes {\sf y}_{3,1} \\
 && + (\btau_0\btau_1 + ( \beta_{03} + \beta_{14}) \mtau \bxi_1^2 + ( \beta_{03}) \mrho \btau_0 \bxi_1^2 
 + \jay_{24} \mrho^2 \bxi_2 + \jay_{24} \mrho^2 \bxi_1^3) \otimes {\sf y}_{4,1}  \\
 && + (\btau_1 \bxi_1 + \btau_0 \bxi_1^2 + \beta_{25} \mrho \bxi_1^3 + (\alpha_{03} + \beta_{25}) \mrho \bxi_2) \otimes {\sf y}_{5,2} \\
 &&  + (\beta_{26} \mtau \bxi_1^3 + (\beta_{26} + \gamma_{36}) \mrho \btau_0 \bxi_1^3 + (\beta_{25} + \beta_{26} + \gamma_{36}) \mrho \btau_1 \bxi_1^2) \otimes  {\sf y}_{6,2} \\
 &&  + ((1 + \beta_{03} + \beta_{14} + \beta_{26}) \mtau \bxi_2 + (1 + \beta_{03} + \beta_{26} + \gamma_{36}) \mrho \btau_0 \bxi_2)  \otimes  {\sf y}_{6,2}\\
 &&  + ((1 + \alpha_{03} + \beta_{03} + \beta_{25} + \beta_{26} + \gamma_{36} ) \mrho \btau_2 + \jay_{24} \mrho^2 \bxi_1 \bxi_2) \otimes  {\sf y}_{6,2} \\
 &&    + (\btau_0 \btau_1 \bxi_1 +(\jay_{24} + \beta_{06}) \mrho^2 \bxi_1^4 ) \otimes {\sf y}_{6,2}.
\end{eqnarray*}

\medskip
\noindent
Using \eqref{eq:DualizingComodulePractical}, we get the following result, where $\hat{\sf x}_{\sf i,j}$ and  $\hat{\sf y}_{\sf i,j}$ are the elements in  $(\cAR_{ \overline{\mr{v}}}(1))^\dual$ dual to ${\sf x}_{\sf i, j}$ and ${\sf y}_{\sf i,j}$, respectively. 

\begin{thm}
\label{AmoduleDualAone}
The $\cAR(1)$-module structure on the dual $(\cAR_{ \overline{\mr{v}}}(1))^\dual$ is as displayed in the right of \cref{fig:motivicA1}.  Moreover,  its $\cAR$-module structure is determined  by
\begin{eqnarray*}
\Sq^4 (\hat{\sf y}_{6,2}) &=& (\beta_{25} + \beta_{26}) (\mrho \cdot  \hat{\sf x}_{3,1}) + (1 + \beta_{26} ) (\mtau  \cdot \hat{\sf x}_{2,1}) + \gamma_{36} (\mrho \cdot \hat{\sf y}_{3,1})  \\
 \Sq^4( \hat{\sf y}_{5,2} ) &=& \hat{\sf x}_{1,0}  + \beta_{25} (\mrho \cdot \hat{\sf x}_{2,1}) \\
 \Sq^4 ( \hat{\sf y}_{4,1} ) &=& (\beta_{03} + \beta_{14}) (\mtau \cdot \hat{\sf x}_{0,0}) +  \beta_{14} (\mrho \cdot \hat{\sf x}_{1,0})  + \jay_{24} (\mrho^2  \cdot \hat{\sf x}_{2,1}) \\
 \Sq^4 ( \hat{\sf y}_{3,1} ) &=& \beta_{03} (\mrho \cdot \hat{\sf x}_{0,0})  \\
 \Sq^4 ( \hat{\sf x}_{3,1} ) &=& \alpha_{03} (\mrho \cdot  \hat{\sf x}_{0,0}) \\
 \Sq^8 ( \hat{\sf y}_{6,2} ) &=& (\jay_{24} + \beta_{06}) (\mrho^2  \cdot \hat{\sf x}_{0,0}).
\end{eqnarray*}
\end{thm}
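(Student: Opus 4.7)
The plan is to apply the two-step machinery of Section 3. First, invoke \cref{prop:ComodFromAMod} to convert the $\cAR$-module data of \cref{list} into the $\cAR_\star$-comodule structure $\uppsi_{\mr{L}}$ on $\cAR_{\overline{\mr{v}}}(1)$; this part of the computation has in fact already been carried out in the long list of formulas displayed immediately preceding the theorem statement, using the Milnor basis description of \cref{tbl:MilnorBasis} together with the conjugation formulas of Section 2. Second, invoke \cref{prop:DualizingComodule} in its practical form \eqref{eq:DualizingComodulePractical}: for each generator $\varphi \in \{\Sq^1, \Sq^2, \Sq^4, \Sq^8\}$, each dual basis element $\hat{\sf w} \in \{\hat{\sf x}_{i,j}, \hat{\sf y}_{i,j}\}$, and each basis element $n$ of $\cAR_{\overline{\mr{v}}}(1)$, evaluate
\[
(\varphi \cdot \hat{\sf w})(n) \;=\; \sum_i \varphi\bigl(a_i\,\upeta_{\mr{R}}(\hat{\sf w}(n_i))\bigr), \qquad \uppsi_{\mr{L}}(n) = \sum_i a_i \otimes n_i.
\]
Since each $\hat{\sf w}$ is nonzero on exactly one basis element, with value $1$ there, the right-unit factor is trivial and the formula reduces to applying $\varphi$ to the $\cAR_\star$-coefficient of $\hat{\sf w}$'s dual element in $\uppsi_{\mr{L}}(n)$.

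The $\cAR(1)$-picture on the right of \cref{fig:motivicA1} is then essentially immediate: the $\btau_0\otimes(-)$ and $\bxi_1\otimes(-)$ parts of $\uppsi_{\mr{L}}$ directly encode the $\Sq^1$- and $\Sq^2$-actions from \cref{list}, and \eqref{eq:DualizingComodulePractical} turns these into the reversed $\cAR(1)$-diagram shown.

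The substance of the proof is the extraction of the six stated formulas for $\Sq^4$ and $\Sq^8$. Since $\Sq^4$ and $\Sq^8$ are the Milnor basis elements dual to $\bxi_1^2$ and $\bxi_1^4$ respectively, they vanish on every other left-$\bMR$-basis monomial of $\cAR_\star$; computing each action therefore amounts to isolating from $\uppsi_{\mr{L}}(n)$ the coefficient of $\bxi_1^2 \otimes (-)$ or $\bxi_1^4 \otimes (-)$. The main obstacle is sheer bookkeeping: one must distinguish $\bxi_1^2$ from the neighboring basis monomial $\bxi_2$ (dual to $\mathcal{P}^{(0,1)}$, not $\Sq^4$), distinguish $\bxi_1^4$ from $\bxi_1 \bxi_2$ (dual to $\mathcal{P}^{(1,1)}$, not $\Sq^8$), and carefully track the $\bMR$-coefficients involving $\mtau$, $\mrho$, and the seven parameters $\alpha_{03}, \beta_{03}, \beta_{14}, \beta_{06}, \beta_{25}, \beta_{26}, \gamma_{36}$ (together with the derived quantity $\jay_{24}$). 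Once these coefficients are correctly isolated and collected, the six formulas for $\Sq^4 \hat{\sf y}_{6,2}$, $\Sq^4 \hat{\sf y}_{5,2}$, $\Sq^4 \hat{\sf y}_{4,1}$, $\Sq^4 \hat{\sf y}_{3,1}$, $\Sq^4 \hat{\sf x}_{3,1}$, and $\Sq^8 \hat{\sf y}_{6,2}$ fall out mechanically.
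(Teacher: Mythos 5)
Your proposal is correct and follows exactly the route the paper takes: apply \cref{prop:ComodFromAMod} to obtain the displayed $\uppsi_{\mr{L}}$ from \cref{list}, then apply \eqref{eq:DualizingComodulePractical}, noting that $\upeta_{\mr{R}}(\hat{\sf w}(n_i))$ is $1$ or $0$ so that the action of $\varphi \in \{\Sq^4, \Sq^8\}$ amounts to reading off the left-$\bMR$-coefficient of the dual Milnor monomial ($\bxi_1^2$ or $\bxi_1^4$) in the relevant tensor factor of $\uppsi_{\mr{L}}(n)$. Your attention to the distinction between $\bxi_1^2$ and $\bxi_2$, and between $\bxi_1^4$ and $\bxi_1\bxi_2$, is precisely the bookkeeping needed to get the $\jay_{24}$- and $\beta_{06}$-coefficients right.
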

\begin{cor} \label{Rmotivicselfdual}
For the $\cAR$-module  $\cAR_{ \overline{\mr{v}}}(1)$,  its (regraded) dual is isomorphic to 
\[
\Sigma^{6,2}(\cAR_{ \overline{\mr{v}}}(1))^\dual \iso  
\cAR_{\updelta(\overline{\mr{v}})}(1),
\]
where $\updelta(\overline{\mr{v}}) =  (\gamma_{36},\beta_{25}+\beta_{26},\beta_{25},\jay_{24}+\beta_{06},\beta_{14},\beta_{03} + \beta_{14},\alpha_{03}).$ Thus,  $\cAR_{ \overline{\mr{v}}}(1)$ is self dual if and only if
 \begin{enumerate}
 \item \label{cond1}  $\alpha_{03} = \gamma_{36}$,
 \item \label{cond2} $\beta_{03} = \beta_{25} + \beta_{26}$, and 
 \item \label{cond3} $\beta_{14} = \beta_{25}$.
 \end{enumerate}
 \end{cor}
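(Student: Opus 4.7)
The plan is to recognize $\Sigma^{6,2}(\cAR_{\overline{\mr{v}}}(1))^\dual$ as a module in the classification of \cref{list}, by translating \cref{AmoduleDualAone} into the standard basis, and then solve the fixed-point equation $\overline{\mr{v}} = \updelta(\overline{\mr{v}})$.

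First, I would match the basis of the regraded dual to the standard basis of \cref{list} via bidegree. Since each $\hat{\sf x}_{{\sf i},{\sf j}}$ and $\hat{\sf y}_{{\sf i},{\sf j}}$ sits in bidegree $(-{\sf i},-{\sf j})$, the $(6,2)$-suspension places them at precisely the bidegrees of the generators of a copy of $\cAR(1)$. This produces the correspondence ${\sf x}_{{\sf i},{\sf j}} \leftrightarrow \hat{\sf y}_{6-{\sf i},2-{\sf j}}$ for the lower half and ${\sf y}_{{\sf i},{\sf j}} \leftrightarrow \hat{\sf x}_{6-{\sf i},2-{\sf j}}$ for the upper half, which is compatible with the $\cAR(1)$-action already recorded on the right of \cref{fig:motivicA1}.

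Next, I would rewrite each of the six formulas in \cref{AmoduleDualAone} in this new basis and read off the seven parameters of $\updelta(\overline{\mr{v}})$ by matching coefficients against the $\cAR_{\overline{\mr{v}}'}(1)$ formulas of \cref{list}. For instance, the expression for $\Sq^4(\hat{\sf y}_{6,2})$ becomes a formula for $\Sq^4({\sf x}_{0,0})$, whose coefficients on $\mrho\cdot {\sf x}_{3,1}$, $\mrho\cdot {\sf y}_{3,1}$, and $\mtau \cdot {\sf y}_{4,1}$ pin down $\alpha_{03}' = \gamma_{36}$, $\beta_{03}' = \beta_{25}+\beta_{26}$, and (via $1+\beta_{03}'+\beta_{14}' = 1+\beta_{26}$) $\beta_{14}' = \beta_{25}$. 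Reading off the remaining four coefficients from the other five formulas yields $\beta_{06}' = \jay_{24}+\beta_{06}$, $\beta_{25}' = \beta_{14}$, $\beta_{26}' = \beta_{03}+\beta_{14}$, and $\gamma_{36}' = \alpha_{03}$, thereby confirming the claimed formula for $\updelta(\overline{\mr{v}})$.

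Finally, self-duality amounts to $\overline{\mr{v}} = \updelta(\overline{\mr{v}})$, which is a system of seven $\F_2$-equations. Three of them yield conditions (1)--(3) directly, and the equations $\gamma_{36} = \alpha_{03}$ and $\beta_{25} = \beta_{14}$ are identical to them, while $\beta_{26} = \beta_{03}+\beta_{14}$ becomes a tautology under (2). The one step I expect to be the main obstacle is the fourth equation $\beta_{06} = \jay_{24}+\beta_{06}$, which forces $\jay_{24} = 0$: substituting the three listed conditions into $\jay_{24} = \beta_{03}\gamma_{36} + \alpha_{03}(\beta_{25}+\beta_{26})$ yields $\beta_{03}(\gamma_{36}+\alpha_{03})$, which vanishes by (1). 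Thus the three listed conditions are both necessary and sufficient, and a dimension count in $\F_2^7$ then gives the $2^{7-3}=16$ of \Cref{thm:16}.
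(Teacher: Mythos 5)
Your proof is correct and is exactly the argument the paper leaves implicit: the corollary is stated immediately after \cref{AmoduleDualAone} without a separate proof, and the intended reasoning is precisely what you do — match generators by bidegree (using the $\cAR(1)$-action in \cref{fig:motivicA1} to split the two classes in bidegree $(3,1)$), read off $\updelta(\overline{\mr{v}})$ by comparing coefficients against \cref{list}, and solve $\overline{\mr{v}}=\updelta(\overline{\mr{v}})$. Your observation that the apparent extra condition $\jay_{24}=0$ is forced by conditions (1)--(3), since $\jay_{24}=\beta_{03}(\gamma_{36}+\alpha_{03})$ after substituting (2), is the one nontrivial check and you handle it correctly.
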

 \begin{rmk}
 The constant $\jay_{24}$ has a geometric significance noted in \cite[Remark 1.21]{BGLtwo}. It follows from \cref{Rmotivicselfdual} that  $\jay_{24}=0$ whenever $\cAR_{ \overline{\mr{v}}}(1)$ is self-dual. 
\end{rmk}
\begin{rmk} The underlying classical $\cA$-module structure on $\cA(1)$ is self-dual if and only if $\beta_{26} = \beta_{03} + \beta_{14}$. In the presence of \eqref{cond3}, this is equivalent to \eqref{cond2}. Thus the conditions of \cref{Rmotivicselfdual} can be thought of as the classical condition, plus conditions \eqref{cond1} and \eqref{cond3}.
\end{rmk}
\noindent
In \cite{BGLtwo}, we showed that the $\cAR$-modules $\cAR_{ \overline{\mr{v}}}(1)$ can be realized as the cohomology of an $\R$-motivic spectrum for all values of $\overline{\mr{v}}$.

\begin{cor}  \label{cor:self-map}
Suppose $\cAR_1[\overline{\mr{v}}]$ is an $\R$-motivic spectrum realizing  $\cAR_{ \overline{\mr{v}}}(1)$, and 
suppose that $\cAR_{ \overline{\mr{v}}}(1)$  is a self-dual $\cAR$-module. Then  $\cAR_1[\overline{\mr{v}}]$ is the cofiber of a $v_1$-self-map on either $\cYR_{2,1}$ or $\cYR_{\hsf,1}$.
\end{cor}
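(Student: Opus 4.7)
The plan is to combine \cite{BGLtwo}*{Theorem~1.8}, which realizes every $\cAR_{\overline{\mr{v}}}(1)$ as the cofiber of a map $f\colon \cYR_{(\mr{a},1)} \to \cYR_{(\mr{b},1)}$ for some $\mr{a},\mr{b} \in \{2,\hsf\}$, with the self-duality criterion of \Cref{Rmotivicselfdual}. Given such a cofiber presentation of $\cAR_1[\overline{\mr{v}}]$, the cellular filtration identifies the bottom four cells of $\mr{H}^\star(\cAR_1[\overline{\mr{v}}])$ with $\mr{H}^\star(\cYR_{(\mr{b},1)})$ as an $\cAR$-submodule, and the top four cells with a shift of $\mr{H}^\star(\cYR_{(\mr{a},1)})$ as the quotient. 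The target is to show that self-duality forces $\mr{a} = \mr{b}$.

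I would distinguish the two lifts cohomologically: in $\mr{H}^\star(\cYR_{(2,1)})$ the action of $\Sq^2$ on the bottom cell carries a $\mrho$-multiple of the $(1,0)$-cell (inherited from $\mathbb{S}_\R/2$), whereas this $\mrho$-term vanishes in $\mr{H}^\star(\cYR_{(\hsf,1)})$; compare \Cref{eg:Smodh}. Direct inspection of the formulas of \Cref{list} then reads off $\mr{a}$ and $\mr{b}$ from the parameters $\overline{\mr{v}}$. Next, by \Cref{AlgSWDuals}, $\mr{H}^\star(\mr{D}\cAR_1[\overline{\mr{v}}]) \iso (\cAR_{\overline{\mr{v}}}(1))^\dual$; if the module is self-dual, \Cref{Rmotivicselfdual} identifies this dual with $\cAR_{\overline{\mr{v}}}(1)$ after regrading, so $\mr{D}\cAR_1[\overline{\mr{v}}]$ is again a realization of the same module. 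Since each $\cYR_{(?,1)}$ is Spanier-Whitehead self-dual up to suspension (a K\"unneth extension of \Cref{eg:Smodh}), dualizing the cofiber sequence swaps the roles of $\mr{a}$ and $\mr{b}$, and matching the bottom-half cohomological signatures of $\cAR_1[\overline{\mr{v}}]$ and $\mr{D}\cAR_1[\overline{\mr{v}}]$ forces $\mr{a} = \mr{b}$.

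Finally, the attaching map $f$ is a $v_1$-self-map because the extension from the bottom half to the top half of $\cAR(1)$ is exactly the $\Sq^4$-action, the $\R$-motivic lift of $v_1$. I expect the main obstacle to lie in the parameter-level bookkeeping of the middle step: precisely matching the self-duality identities $\alpha_{03} = \gamma_{36}$, $\beta_{03} = \beta_{25} + \beta_{26}$, and $\beta_{14} = \beta_{25}$ of \Cref{Rmotivicselfdual} with the cohomological signature condition $\mr{a} = \mr{b}$, and ruling out the possibility that some $\overline{\mr{v}}$ could admit two genuinely different cofiber presentations of mixed type.
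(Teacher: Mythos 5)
Your plan and the paper's proof both start from \cite{BGLtwo}*{Theorem~1.8}, but after that you diverge. The paper simply quotes the parameter dictionary from that theorem: $\cAR_1[\overline{\mr{v}}]$ is the cofiber of a self-map on $\cYR_{(2,1)}$ exactly when $\beta_{25}+\beta_{26}+\gamma_{36}=1$ \emph{and} $\alpha_{03}+\beta_{03}=1$, and on $\cYR_{(\hsf,1)}$ when both quantities are $0$. Conditions (1) and (2) of \Cref{Rmotivicselfdual} give $\gamma_{36}=\alpha_{03}$ and $\beta_{25}+\beta_{26}=\beta_{03}$, so $\beta_{25}+\beta_{26}+\gamma_{36}=\alpha_{03}+\beta_{03}$ automatically, and we are always in one of the two self-map cases. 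That's the entire proof.

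Your version instead dualizes the cofiber sequence, uses Spanier--Whitehead self-duality of the two $\cYR$ spectra, and tries to conclude $\mr{a}=\mr{b}$ by matching a ``cohomological signature.'' The difficulty is that you have relocated the whole content of the argument into a step you yourself flag as the ``main obstacle'': the ``parameter-level bookkeeping'' needed to match the self-duality identities against the cofiber-type dichotomy, and to rule out ambiguous mixed presentations. That bookkeeping \emph{is} the proof, and the comparison the paper actually needs is not the one you propose. You suggest distinguishing $\cYR_{(2,1)}$ from $\cYR_{(\hsf,1)}$ by whether $\Sq^2$ on the bottom cell carries a $\mrho$-term, but the conditions in Theorem~1.8 that determine $\mr{a}$ and $\mr{b}$ are $\alpha_{03}+\beta_{03}$ and $\beta_{25}+\beta_{26}+\gamma_{36}$, which are $\Sq^4$-structure constants, not $\Sq^2$ ones. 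In the free $\cAR(1)$-module the bottom-cell $\Sq^2$ action is always just ${\sf x}_{0,0}\mapsto {\sf x}_{2,1}$ with no $\mrho$-coefficient to read off; whatever $\mrho$-terms appear in $\mr{H}^\star(\cYR_{(\mr{b},1)})$ as a quotient are an artifact of a basis change and do not directly encode $\mr{b}$ in the parameters $\overline{\mr{v}}$. (As a minor point, you also have the extension backwards: in the cofiber of $\Sigma^{?}\cYR_{(\mr{a},1)}\to\cYR_{(\mr{b},1)}$, the bottom four cells give a \emph{quotient} $\cAR$-module, not a submodule; the top cells give the sub.)

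So the geometric picture you sketch is plausible as intuition but does not close the argument. To finish along your lines you would still have to translate ``$\mr{a}=\mr{b}$'' into a condition on $\overline{\mr{v}}$, and once you quote Theorem~1.8 precisely you land exactly on the paper's one-line computation — at which point the dualization and signature machinery becomes unnecessary.
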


\begin{pf}
By \cite{BGLtwo}*{Theorem~1.8}, the $\R$-motivic spectrum $\cAR_1[\overline{\mr{v}}]$ is the cofiber of a $v_1$-self map on $\cYR_{2,1}$ if $\beta_{25}+\beta_{26}+\gamma_{36}=1$ and $\alpha_{03}+\beta_{03}=1$, whereas it is the cofiber of a $v_1$-self-map on $\cYR_{\hsf,1}$ if  $\beta_{25}+\beta_{26}+\gamma_{36}=0$ and $\alpha_{03}+\beta_{03}=0$.
But conditions (1) and (2) of \cref{Rmotivicselfdual} imply that $\beta_{25}+\beta_{26}+\gamma_{36}$ is equal to $\alpha_{03}+\beta_{03}$.
\end{pf}

\noindent
Our main results \cref{thm:16} and \cref{thm:8+8} follows from  \cref{Rmotivicselfdual}  and \cref{cor:self-map} respectively. 

\begin{figure}[h]
\[
\begin{tikzpicture}\begin{scope}[ thick, every node/.style={sloped,allow upside down}, scale=0.8]
\draw (0,0)  node[inner sep=0] (s0) {} -- (0,1) node[inner sep=0] (s1a) {};
\draw (1.5,2)  node[inner sep=0] (t2) {} -- (1.5,3) node[inner sep=0] (t3a) {};
\draw (3,1)  node[inner sep=0] (s1b) {} -- (3,2) node[inner sep=0] (s2) {};
\draw (4.5,3)  node[inner sep=0] (t3b) {} -- (4.5,4) node[inner sep=0] (t4) {};
 \draw [color=blue,bend right] (s0) to node[pos=0.5,above] {$\alpha_{03}$} (t2);
  \draw [color=blue,out=20,in=-160] (s0) to node[pos=0.5,below] {$\beta_{03}$} (s2);  
  \draw [color=blue,bend left=20] (s1a) to node[pos=0.4,above={-1pt}] {$\beta_{14}$}  (t3a);
   \draw [color=blue,out=150,in=-30] (s1b) to (t3a);
  \draw [color=blue,bend right] (s1b) to  node[pos=0.5,below] {$\beta_{14}$}(t3b);
   \draw [color=blue,out=30,in=210] (s2) to node[pos=0.5,below] {$\alpha_{03}$} (t4);
   \draw [color=blue,out=15,in=195] (t2) to node[pos=0.7,above] {$\beta_{03}$} (t4);
\draw [color = red] (s0) to (5.5,0) to (5.5,4) to (t4);
\filldraw (s0) circle (2.5pt);
\filldraw (s1a) circle (2.5pt);
\filldraw (t2) circle (2.5pt);
\filldraw (t3a) circle (2.5pt);
\filldraw (s1b) circle (2.5pt);
\filldraw (s2) circle (2.5pt);
\filldraw (t3b) circle (2.5pt);
\filldraw (t4) circle (2.5pt);
\draw (s0) node[left]{$ {\sf s}_0$} (0,1) node[left]{$ {\sf s}_{1a}$} (s1b) node[below]{${\sf s}_{1b} $} (s2) node[right]{$\ {\sf s}_2 $};
\draw (t2) node[left]{$ {\sf t}_2$} (t3a) node[above]{$ {\sf t}_{3a}$} (t3b) node[below right={-2pt}]{${\sf t}_{3b} $} (t4) node[above]{${\sf t}_4 $};
\draw (5.5,2) node[right]{$\beta_{06}$};
\end{scope}\end{tikzpicture}
\]
\caption{The $\cA$-module structure of  a self-dual $\rH^*(\GFix{\cA_{1}^\Ctwo [\overline{\mr{v}}]})$. }
\label{fig:HPhiA1}
\end{figure}
\noindent
\begin{rmk}
Using the Betti realization functor, \cite{BGLtwo} produced $\mr{C}_2$-equivariant realizations of analogous $\cA^{\mr{C}_2}$-modules $\cA_{\overline{\mr{v}}}^{\mr{C}_2} (1)$. Using the comparison result \cite[Theorem 1.19]{BGLtwo}, the $\cA$-module structures on  $\Phi(\cA_{1}^{\mr{C}_2} [\overline{\mr{v}}])$, the geometric fixed points of $\cA_{1}^{\mr{C}_2} [\overline{\mr{v}}]$, was identified in \cite[Figure 4.12]{BGLtwo}. In \cref{fig:HPhiA1}, we record the $\cA$-module structure on the geometric fixed points of a self-dual $\cA_{1}^{\mr{C}_2} [\overline{\mr{v}}]$. 
\end{rmk}

\appendix
\section{ On the antiautomorphism of $\cAR$}
\label{sec:Lift}

\noindent
Although Boardman \cite[$\mathsection$6]{B} pointed out that the set of $\mr{E}$-cohomology operations $[\mr{E}, \mr{E}]^{\ast}$ may not necessarily have an antiautomorphism  for a cohomology theory $\mr{E}$, we find the case of $\mr{E} = \HM$ a rather curious one. 

\medskip
\noindent
 The case of $\mr{E} = \HF$ is exceptional;  the Steenrod algebra $\cA := [\HF, \HF]_*$ is well-known to be a Hopf algebra and, therefore, equipped with an antiautomorphism 
$
 \begin{tikzcd}
 \chi:\cA \ar[r] & \cA.
 \end{tikzcd}
$
The composition of extension of scalars and Betti realization,
\[
\begin{tikzcd}
\Sp^\R \ar[rr, "\C\otimes_\R -"] && \Sp^\C \ar[rr,"\Betti"] && \Sp, 
\end{tikzcd}
\]
induces maps of Steenrod algebras
\[ 
\begin{tikzcd}
\cAR \rar[two heads, "\uppi_1"] & \cA^\C \cong \cAR/(\rho) \rar[two heads, "\uppi_2"] & \cA \cong \cAR/(\tau-1, \rho),
\end{tikzcd}
\]
where $\uppi_1$ sends $\rho$ to 0 and $\uppi_2$ sends $\tau$ to 1.

\medskip
\noindent
The antiautomorphism $\chi$ of the classical Steenrod algebra is known to lift along $\uppi_{2}$,
\[ 
\begin{tikzcd}
\cA^\C \rar["\chi^{\C}"] \dar["\uppi_2"'] & \cA^\C \dar["\uppi_2"] \\
\cA \rar["\chi"] & \cA,
\end{tikzcd}
\]
 as the $\C$-motivic Steenrod algebra
is a connected bialgebra.
However, lifting  $\chi^{\C}$ along $\uppi_1$  is less straightforward.
The dual $\R$-motivic Steenrod algebra $\cAR_\star$ is a Hopf {\it algebroid}, rather than a Hopf algebra, so that its dual is not a Hopf algebra. 

\medskip
\noindent
One feature that distinguishes $\cAR$ from $\cAC$ is the fact that $\mtau$ is not central in $\cAR$. In the following result, we use the commutators $[\mtau, \Sq^{2^n}]$ in $\cAR$ (computed using the Cartan formula \cite{V}*{Proposition~9.7}) to compute the values of a hypothetical antiautomorphism in low degrees.

\begin{prop} \label{prop:antipodeR}
Suppose that $\chi^\R \colon \cAR \rtarr \cAR$ is a ring antihomomorphism and an involution.
Then 
\begin{eqnarray*}
	\chi^\R(\mtau) &=& \mtau \\ 
	\chi^\R(\mrho) &=& \mrho \\
	 \chi^\R(\Sq^1) &=& \Sq^1 \\
	\chi^\R(\Sq^2) &=& \Sq^2 + \mrho \Sq^1 \\
	 \chi^\R(\Sq^4) &=&  \Sq^4 + \mrho \Sq^2 \Sq^1 + \mtau \Sq^1 \Sq^2 \Sq^1.
\end{eqnarray*}
\end{prop}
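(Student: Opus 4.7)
The plan is to use bidegree preservation and the non-centrality of $\mtau$ in $\cAR$ to pin down $\chi^\R$ degree by degree, with the commutators $[\Sq^{2^n}, \mtau]$ providing the key equations.

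First, since $\bMR = \F_2[\mtau, \mrho]$ is commutative, the restriction of $\chi^\R$ to $\bMR$ is an $\F_2$-algebra involution. Bidegree preservation and invertibility force $\chi^\R(\mtau) = \mtau$ and $\chi^\R(\mrho) = \mrho$, since each of these bidegrees contains a unique nonzero element of $\bMR$. Similarly, bidegree $(1, 0)$ of $\cAR$ is spanned by $\Sq^1$, so $\chi^\R(\Sq^1) = \Sq^1$. For $\chi^\R(\Sq^2)$, use the $\F_2$-basis $\{\Sq^2, \mrho \Sq^1\}$ of the bidegree-$(2, 1)$ piece of $\cAR$. Writing $\chi^\R(\Sq^2) = \alpha \Sq^2 + \beta \mrho \Sq^1$ and imposing $\chi^\R \circ \chi^\R = \id$, using that $\chi^\R(\mrho \Sq^1) = \Sq^1 \mrho = \mrho \Sq^1$ (as $\Sq^1(\mrho) = 0$ by bidegree), one finds $\alpha = 1$ while $\beta$ remains undetermined. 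To pin down $\beta$, compute the commutator $[\Sq^2, \mtau] \in \cAR$ via Voevodsky's Cartan formula; applying $\chi^\R$ (with order reversal) to this identity and comparing coefficients in the chosen basis yields a linear equation forcing $\beta = 1$.

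The computation for $\chi^\R(\Sq^4)$ follows the same template in bidegree $(4, 2)$. Using the Milnor basis of \cref{tbl:MilnorBasis}, the bidegree-$(4, 2)$ piece of $\cAR$ has $\F_2$-basis $\{\Sq^4,\ \mtau \Sq^1 \Sq^2 \Sq^1,\ \mrho \Sq^1 \Sq^2,\ \mrho \Sq^2 \Sq^1\}$. Write $\chi^\R(\Sq^4)$ as an unknown linear combination of these four elements with the coefficient of $\Sq^4$ forced to be $1$ by the involution property, compute $[\Sq^4, \mtau]$ using the Cartan formula, apply $\chi^\R$, and substitute the already-known values of $\chi^\R$ on $\mtau, \mrho, \Sq^1, \Sq^2$. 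This yields an $\F_2$-linear system whose unique solution is the claimed $\chi^\R(\Sq^4) = \Sq^4 + \mrho \Sq^2 \Sq^1 + \mtau \Sq^1 \Sq^2 \Sq^1$.

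The main obstacle is the bookkeeping in this last step: the Cartan expansion for $\Sq^4$ applied to a $\mtau$-multiple has more terms than the $\Sq^2$ case, and evaluating $\chi^\R$ on products of four or five generators (with order reversal) requires repeated use of the already-determined commutation relations to reduce everything to the chosen basis. Nonetheless, once the commutator identity is written out explicitly, the resulting linear system over $\F_2$ is small and admits a unique solution, yielding the stated formula.
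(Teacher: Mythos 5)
Your proof follows essentially the same route as the paper's: determine $\chi^\R$ on $\mtau$, $\mrho$, $\Sq^1$ by bidegree uniqueness, then write $\chi^\R(\Sq^{2^n})$ as an unknown linear combination of the $\F_2$-basis in the relevant bidegree and use the commutators $[\mtau, \Sq^{2^n}]$ (via the identity $\chi^\R[r,s] = [\chi^\R r, \chi^\R s]$ in characteristic~$2$) to solve the resulting $\F_2$-linear system. The one small difference is your more explicit justification that the coefficient of $\Sq^2$ (resp.\ $\Sq^4$) in $\chi^\R(\Sq^2)$ (resp.\ $\chi^\R(\Sq^4)$) must be~$1$: you invoke the involution hypothesis, whereas the paper simply asserts it ``for degree reasons,'' which implicitly rests on invertibility of $\chi^\R$ in each bidegree; both justifications are valid.
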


\begin{pf} If $\chi^\R$ is a ring antihomomorphism then 
\begin{equation} \label{chicommute}
 \chi^\R[r,s] =  [\chi^\R r,\chi^\R s] 
 \end{equation}
 in characteristic $2$.  Since $\tau$ and $\Sq^1$ are unique $\F_2$-generators in their bidegree and $\chi^\R$ is an automorphism, it follows that \[ \chi^\R (\mtau) = \mtau \qquad \text{and} \qquad \ \chi^\R(\Sq^1) = \Sq^1.\]
 For degree reasons, $\chi^\R (\Sq^2)$ must be $\Sq^2 + \varepsilon \mrho \Sq^1$, where $\varepsilon$ is either 0 or 1. 
 But the commutator $[\mtau,\Sq^2]$ is equal to $\mrho \mtau \Sq^1$. Applying \cref{chicommute}, we see that  
 \begin{eqnarray*}
 \chi^\R( \mrho \mtau \Sq^1 ) &=&  [ \chi^\R( \mtau), \chi^\R (\Sq^2) ] \\
 \Rightarrow \hspace{1.65cm} \Sq^1 \tau \rho &=& [ \mtau, \Sq^2 + \varepsilon \mrho \Sq^1 ] \\
  \Rightarrow \hspace{1cm}\mrho \mtau \Sq^1 + \mrho^2 &=& \mrho \mtau \Sq^1 + \varepsilon \mrho^2, 
 \end{eqnarray*}
and therefore,  $\varepsilon$ must be 1.

\medskip 
\noindent
Similarly, degree considerations imply that $\chi^\R (\Sq^4)$ must be of the form $\Sq^4 + \delta \mrho \Sq^1 \Sq^2 + \varepsilon \mrho \Sq^2 \Sq^1 + \lambda \mtau \Sq^1 \Sq^2 \Sq^1$. 
The commutator $[ \mtau, \Sq^4 ]$ is $\mrho \mtau \Sq^1 \Sq^2$, so we conclude that
\begin{eqnarray*}
[ \chi^\R \mtau, \chi^\R \Sq^4 ] &=& [ \mtau,  \Sq^4 + \delta \mrho \Sq^1 \Sq^2 + \varepsilon \mrho \Sq^2 \Sq^1 + \lambda \mtau \Sq^1 \Sq^2 \Sq^1] \\
	&=& (1+\lambda) \mrho \mtau \Sq^1 \Sq^2 + \lambda \mrho \mtau \Sq^2 \Sq^1 + (\delta+\varepsilon) \mrho^2 \Sq^2 + \delta \mrho^3 \Sq^1
\end{eqnarray*}
must agree with

\begin{eqnarray*}
	\chi^\R ( \mrho \mtau \Sq^1 \Sq^2 ) &=& (\Sq^2 + \mrho \Sq^1) \Sq^1 \mtau \mrho \\
	&=& \mrho \mtau \Sq^2 \Sq^1 + \mrho^2 \Sq^2, 
\end{eqnarray*}
and therefore, $\delta=0$, $\varepsilon=1$, and $\lambda=1$ as desired.
\end{pf}
\noindent 

\noindent
\cref{prop:antipodeR} suggests there might be an $\R$-motivic antiautomorphism on the subalgebra $\cA^\R(2): = \bMR\langle \Sq^1, \Sq^2, \Sq^4 \rangle \subset \cA^\R$. It seems likely that the method above can be extended to produce an antiautomorphism on all of $\cAR$. However, we leave open the question of whether or not this is possible. 
 
 \medskip
 \noindent
 On the other hand, the following remark shows that an antihomomorphism on $\cAR$ may not be directly of use in dualizing $\cAR$-modules.
 
\begin{rmk}
\label{rmk:AntihomUseless}
Note that if $\bN$ is an $\cAR$-module, then the action of $\cAR$ on $\bN$ is not $\bMR$-linear, so that, in contrast to the classical case, it does not induce a right $\cAR$-action on the dual $\bN^{\dual}$.
Even if $\cAR$ were to be hypothetically equipped with an antiautomorphism $\chi^\R$, this may not be so useful for the purpose of dualization. The reason is that the classical formula \cref{eqn:D''} does not work in this setting. More precisely, let $\bN$ be an $\cAR$-module, let $\lambda \in \bN^{\dual}$, $\varphi \in \cAR$, and $n \in {\bf N}$. Then defining a new action  $\varphi\odot \lambda$ by
\[ 
	(\varphi \odot \lambda)(n) = \lambda( \chi^\R \varphi \cdot n)
\]
does not produce an $\bMR$-linear function. For instance, consider the case $\bN=\mr{H}^{\star}(\mathbb{S}_\R/\hsf)$ from \cref{eg:Smodh}. Then $(\Sq^2\odot \hat{\sf x}_{1,0})( \mtau {\sf x}_{0,0})$ vanishes, whereas $(\Sq^2\odot \hat{\sf x}_{1,0})(  {\sf x}_{0,0})$ is equal to $\mrho$. It follows that the formula for $\Sq^2 \odot \hat{\sf x}_{1,0}$ is not $\bMR$-linear and is therefore not a valid element of $\bN^\dual$.
\end{rmk}

\bibliographystyle{amsalpha}
\begin{bibdiv}
\begin{biblist}

\bib{BEM}{article}{
   author={Bhattacharya, Prasit},
   author={Egger, Philip},
   author={Mahowald, Mark},
   title={On the periodic $v_2$-self-map of $A_1$},
   journal={Algebr. Geom. Topol.},
   volume={17},
   date={2017},
   number={2},
   pages={657--692},
   issn={1472-2747},
   review={\MR{3623667}},
   doi={10.2140/agt.2017.17.657},
}


\bib{BGLtwo}{article}{
   author={Bhattacharya, P.},
   author={Guillou, B.},
   author={Li, A.},
   title={On realizations of the subalgebra $\mathcal{A}^\mathbb{R}(1)$ of the
   $\mathbb{R}$-motivic Steenrod algebra},
   journal={Trans. Amer. Math. Soc. Ser. B},
   volume={9},
   date={2022},
   pages={700--732},
   review={\MR{4450906}},
   doi={10.1090/btran/114},
}

\bib{B}{article}{
   author={Boardman, J. M.},
   title={The eightfold way to BP-operations or $E\sb\ast E$ and all that},
   conference={
      title={Current trends in algebraic topology, Part 1},
      address={London, Ont.},
      date={1981},
   },
   book={
      series={CMS Conf. Proc.},
      volume={2},
      publisher={Amer. Math. Soc., Providence, R.I.},
   },
   date={1982},
   pages={187--226},
   review={\MR{686116}},
}

\bib{DM}{article}{
   author={Davis, Donald M.},
   author={Mahowald, Mark},
   title={$v\sb{1}$- and $v\sb{2}$-periodicity in stable homotopy theory},
   journal={Amer. J. Math.},
   volume={103},
   date={1981},
   number={4},
   pages={615--659},
   issn={0002-9327},
   review={\MR{0623131}},
   doi={10.2307/2374044},
}

\bib{GL}{article}{
	author={Gao, Xu},
	author={Li, Ang},
	title={The stable Picard group of finite Adams Hopf algebroids with an application to the $\mathbb{R}$-motivic Steenrod subalgebra $\cAR(1)$},
	eprint={https://arxiv.org/abs/2306.12527},
	year={2023},
}	

\bib{K}{article}{
   author={Kylling, Jonas Irgens},
   title={Recursive formulas for the motivic Milnor basis},
   journal={New York J. Math.},
   volume={23},
   date={2017},
   pages={49--58},
   review={\MR{3611073}},
}

\bib{MR}{article}{
   author={Mahowald, Mark},
   author={Rezk, Charles},
   title={Brown-Comenetz duality and the Adams spectral sequence},
   journal={Amer. J. Math.},
   volume={121},
   date={1999},
   number={6},
   pages={1153--1177},
   issn={0002-9327},
   review={\MR{1719751}},
}

\bib{V}{article}{
   author={Voevodsky, Vladimir},
   title={Reduced power operations in motivic cohomology},
   journal={Publ. Math. Inst. Hautes \'{E}tudes Sci.},
   number={98},
   date={2003},
   pages={1--57},
   issn={0073-8301},
   review={\MR{2031198}},
   doi={10.1007/s10240-003-0009-z},
}
\end{biblist}
\end{bibdiv}

\end{document}